\documentclass[a4paper,12pt]{article}

\usepackage[affil-it]{authblk}

\title{On 2D Harmonic Extensions of Vector Fields and Stellarator Coils}
\author{Adam J. Golab, James C. Robinson, Jos\'e L. Rodrigo}
\affil{Mathematics Research Centre, Zeeman Building, \\ University of Warwick, Coventry CV4 7AL, \\ United Kingdom}
\date{}

\usepackage[margin=1in]{geometry}

\usepackage{amsmath}
\usepackage{amssymb}
\usepackage{amsfonts}
\usepackage{amsthm}
\usepackage{enumerate}
\usepackage{empheq}
\usepackage{bm}
\usepackage{cases}
\usepackage{hyperref}
\usepackage{xcolor}

\usepackage{float}
\usepackage{tikz}
\usetikzlibrary{calc}
\usetikzlibrary{intersections}
\usetikzlibrary{fit}
\usetikzlibrary{through}
\usetikzlibrary{patterns}

\newtheorem{thm}{Theorem}[section] 
\newtheorem{lem}[thm]{Lemma}

\theoremstyle{definition}

\newtheorem{eg}[thm]{Example}
\newtheorem{rmk}[thm]{Remark}

\newcommand{\B}{\bm{B}}

\newcommand{\T}{\bm{t}}
\newcommand{\N}{\bm{n}}

\newcommand{\RE}{\operatorname{Re}}
\newcommand{\IM}{\operatorname{Im}}

\begin{document}
\maketitle

\begin{abstract}
\let\thefootnote\relax\footnotetext{E-mail addresses: \texttt{A.J.Golab@warwick.ac.uk}, \texttt{J.C.Robinson@warwick.ac.uk}, \\ \texttt{J.Rodrigo@warwick.ac.uk}.}
We consider a problem relating to magnetic confinement devices known as stellarators. Plasma is confined by magnetic fields generated by current-carrying coils, and here we investigate how closely to the plasma they need to be positioned. Current-carrying coils are represented as singularities within the magnetic field and therefore this problem can be modelled mathematically as finding how far we can harmonically extend a vector field from the boundary of a domain.

For this paper we consider two-dimensional domains with real analytic boundary, and prove that a harmonic extension exists if and only if the boundary data satisfies a combined compatibility and regularity condition. Our method of proof uses a generalisation of a result of Hadamard on the Cauchy problem for the Laplacian. We then provide a lower bound on how far we can harmonically extend the vector field from the boundary via the Cauchy--Kovalevskaya Theorem. 
\end{abstract}

\section{Introduction}

The motivation for the results in this paper arises from the study of magnetic confinement devices and magnetohydrodynamic equilibrium. A magnetic confinement device uses magnetic fields to confine charged particles that make up a plasma. The magnetic fields are typically generated by current-carrying coils that are located in the vacuum surrounding the plasma. One type of magnetic confinement device is the stellarator, which has its plasma configured to be topologically a solid torus. This paper considers a problem\footnote{The authors would like to thank Per Helander for bringing this problem to our attention.} arising in the study of such devices. Let $\Omega$ be a region of toroidal plasma with a magnetic field $\B$ tangent to the plasma boundary $\partial\Omega$ generated by a collection of external current-carrying coils. How close to the plasma boundary does the nearest coil need to be? Alternatively, this problem can be posed as trying to find how far the magnetic field $\B$ can be externally extended from the plasma boundary, subject to the vacuum field equations (curl and divergence free), before arriving at a singularity. The singularity indicates where a current-carrying coil is expected to be located. This is because in the magnetic fields generated by current-carrying coils, the coils appear as singularities in the magnetic field.

In this paper we focus on a two-dimensional version of this problem. More precisely, let $\Omega\subset\mathbb{R}^2$ be an open bounded and simply connected set with real analytic boundary $\partial\Omega$. We use $\T$ and $\N$ to denote the unit tangent and outward normal vectors on $\partial\Omega$ respectively. Given real-valued functions $f$ and $h$ on $\partial\Omega$, we are interested in finding an open connected $U\subset\mathbb{R}^2$ satisfying $\overline{\Omega} \subset U$ such that there exists a vector field $\B=(B_1,B_2)$ solving the following Cauchy problem 
\begin{subequations}\label{ECP}
\begin{empheq}[left=\empheqlbrace]{alignat=2}
\operatorname{div} \B:= \frac{\partial B_1}{\partial x} + \frac{\partial B_2}{\partial y} &= 0  \quad && \text{in} \ U\setminus \overline{\Omega} \label{DIV}\\
\operatorname{curl} \B:=\frac{\partial B_2}{\partial x} - \frac{\partial B_1}{\partial y} &= 0 && \text{in} \ U\setminus \overline{\Omega} \label{CURL}\\
\B\cdot\T &= f  && \text{on} \ \partial\Omega \label{TANG}\\
\B\cdot\N &= h && \text{on} \ \partial\Omega. \label{NORM}
\end{empheq}
\end{subequations}
We are particularly interested in finding how large we can make the distance between $\partial U$ and $\partial\Omega$. We note that $\partial U$ is external to $\Omega$ since $U$ is taken to satisfy $\overline{\Omega} \subset U$.  

We call $\B$ harmonic if it satisfies equations \eqref{DIV} and \eqref{CURL}. Therefore, this problem is equivalent to finding how far we can harmonically extend $\B$ outwards from $\partial\Omega$. Equations \eqref{DIV} and \eqref{CURL} are the vacuum field equations for a magnetic field, and equations \eqref{TANG} and \eqref{NORM} are the boundary conditions. The physically relevant boundary conditions require that $\B$ is tangent to the plasma boundary $\partial\Omega$ which in our set up means taking $h=0$ and leaving $f$ arbitrary, but we treat the case of a general $h$ for its interesting mathematics.

We now give a couple of useful remarks. Given open connected $U\subset\mathbb{R}^2$ satisfying $\overline{\Omega} \subset U$, the uniqueness of solutions to the Cauchy problem \eqref{ECP} in the class $\B \in {C^1(U\setminus \overline{\Omega};\mathbb{R}^2)} \cap C(U\setminus \Omega;\mathbb{R}^2)$ follows from Holmgren's Uniqueness Theorem \cite[\S2.3]{ReRo}. This theorem will also be useful later on in Section \ref{SEC2} when we have to combine together solutions and make sure they coincide on overlaps. The equations $\operatorname{div} \B = \operatorname{curl} \B = 0$ are the Cauchy--Riemann equations for the complex function $\mathcal{B}:=B_1-iB_2$ with respect to $z=x+iy$. This fact will also come in use later on in section \ref{SEC3}. Note that $\mathcal{B}$ being complex analytic implies that $B_1$ and $B_2$ are harmonic. 

We introduce some required definitions. By identifying the unit circle $\mathbb{T}$ with the interval $[0,2\pi]$, the boundary $\partial\Omega$ being real analytic means there exists a parameterisation $\bm{\gamma}=(\gamma_1,\gamma_2):\mathbb{T}\to \partial\Omega$ that is both real analytic (each component is real analytic) and regular ($\bm{\gamma}'(t)\neq 0$ for all $t\in\mathbb{T}$). Given $t_0\in\mathbb{T}$ such that $\bm{\gamma}(t_0) = \bm{v}_0$, we say that $f$ on $\partial\Omega$ is real analytic at $\bm{v}_0$ if $f(\bm{\gamma}(t))$ is real analytic at $t_0$. It is straightforward to check this definition is independent of the parameterisation chosen. We use $C^\omega(\partial\Omega)$ to denote the set of functions that are real analytic at every point in $\partial\Omega$. 

We now provide a summary of the results in this paper. In Section \ref{SEC2} we prove the boundary data has to satisfy a certain degree of regularity in order for a solution to the Cauchy problem \eqref{ECP} to exist. We prove that, for $f,h \in C^1(\partial\Omega)$, there exists an open connected $U\subset\mathbb{R}^2$ satisfying $\overline{\Omega} \subset U$ and a solution $\B \in C^1(U\setminus \overline{\Omega};\mathbb{R}^2) \cap C(U\setminus \Omega;\mathbb{R}^2)$ to the Cauchy problem \eqref{ECP} if and only if the boundary data satisfies a combined regularity and compatibility condition. The condition is that $f-\mathcal{H}h$ is real analytic on $\partial\Omega$ where
$\mathcal{H}$ is the operator given by
\[ \mathcal{H} h(\bm{v}) := \frac{1}{\pi} \lim_{\varepsilon \to 0}  \int_{\partial\Omega\setminus B_\varepsilon(\bm{v})} h(\bm{w}) \frac{ \bm{t}(\bm{v})\cdot(\bm{v}-\bm{w})}{| \bm{v} - \bm{w} |^2} \, \textrm{d}\bm{w} \]
for $\bm{v}\in \partial\Omega$ and $B_\varepsilon(\bm{v})$ is the ball in $\mathbb{R}^2$ centred at $\bm{v}$ with radius $\varepsilon$. We note that if $\partial\Omega$ is taken to be a straight line, then the operator $\mathcal{H}$ is identical to the standard Hilbert transform. In the case $h=0$, the condition of $f-\mathcal{H}h$ being real analytic simplifies to $f$ being real analytic on $\partial\Omega$; and in this case the proof can be considerably shortened.

Our method of proof involves generalising a similar type of result due to Hadamard on the Cauchy problem for the Laplacian \cite{Had}. The relevance of the Cauchy problem for the Laplacian comes from the fact that on simply connected domains every harmonic vector field can be written as the gradient of a harmonic scalar potential. A survey of results on the Cauchy problem for the Laplacian is presented in \cite{HaVaGo}. Hadamard's result considers the case of a flat boundary. We generalise this result to the case where the boundary data lies on an analytic curve. A detailed version of the proof of Hadamard's result but with the Laplacian replaced by the equation $\partial_{yy} u + y^\alpha\partial_{xx} u =0$, can be found in \cite{PaSa}.  

Section \ref{SEC2} shows us that it is not unreasonable to assume our boundary data is real analytic. Therefore, in Section \ref{SEC3} we assume $f,h\in C^\omega(\partial\Omega)$ and use the Cauchy--Kovalevskaya Theorem to find a lower bound on how far we can solve the Cauchy problem \eqref{ECP}. The lower bound depends on the two functions $\Theta$ and $\Lambda$, which will come to be defined by \eqref{THETA} and \eqref{LAMBDA} respectively. $\Theta$ depends on the parameterisation $\bm{\gamma}$ and boundary data $f$ and $h$, whereas $\Lambda$ only depends on $\bm{\gamma}$. We find that we can solve at least a distance $d^*$ away from $\partial\Omega$ where $d^*$ depends on the the Taylor series coefficients of $\Lambda$ and radius of convergence of the Taylor series of $\Lambda\Theta'$.  We show that the distance $d^*$ is no more than half the minimum radius of curvature, 
\[ d^* \leq \frac{1}{2}\inf_{\mathbb{T}}\left(\frac{1}{\kappa}\right), \]
where $\kappa$ is the curvature of $\bm{\gamma}$. We then conclude with some examples on computing and estimating $d^*$.

\section{Boundary Data Regularity}\label{SEC2}

Given a function $\Psi\colon[-1,1]\to \mathbb{R}$ that has a real analytic extension to an open neighbourhood of $[-1,1]$, we let
\[ \Gamma := \{ (x,\Psi(x)): x\in (-1,1) \} \subset \mathbb{R}^2 \]
be the curve that is the graph of $\Psi$. Since every analytic curve can locally be written as the graph of an analytic function, we initially consider a local version of the Cauchy problem \eqref{ECP} where $\partial\Omega$ is replaced by $\Gamma$.  Let ${\Omega = \{ (x,y)\in (-1,1)\times \mathbb{R} : y<\Psi(x)\}}$, and $\N$ be the unit normal to the curve $\Gamma$ facing away from $\Omega$. 

Since on simply connected domains every harmonic vector field can be written as the gradient of a scalar potential, we can locally find a harmonic scalar potential $u$ satisfying $\B=\nabla u$. In this notation the boundary condition \eqref{TANG} becomes
\[ \nabla u(x,\Psi(x)) \cdot (1,\Psi'(x)) = f(x,\Psi(x))\sqrt{1+\Psi'(x)^2}, \]
which by the Fundamental Theorem of Calculus for Line Integrals can be integrated to obtain $u(x,\Psi(x)) = g(x)$ where $g'(x) = f(x,\Psi(x))\sqrt{1+\Psi'(x)^2}$. Furthermore, the boundary condition \eqref{NORM} becomes $\frac{\partial u}{\partial \N} (x,\Psi(x)) = h(x,\Psi(x))$. To simplify notation we replace $h(x,\Psi(x))$ with $h(x)$. 
 
This shows that the Cauchy problem \eqref{ECP} is in a local sense equivalent to the Cauchy problem for the Laplacian given by \eqref{CP2}. We now introduce the following theorem on the existence of the Cauchy problem for the Laplacian.  
\begin{thm}\label{HAD2}
Let $g,h \in C^1([-1,1])$. There exists $U\subset\mathbb{R}^2$, an open connected neighbourhood of $\Gamma$, and $u\in C^2(U\setminus \overline{\Omega}) \cap C^1(U\setminus \Omega)$ that solves
\begin{subequations}\label{CP2}
\begin{empheq}[left =\empheqlbrace]{alignat=2}
\Delta u(x,y) &=0  & \quad &\text{for} \ (x,y)\in U\setminus \overline{\Omega}\\
u (x,\Psi(x))&= g(x) & &\text{for} \ x\in(-1,1) \label{DIR2}\\
\frac{\partial u}{\partial \N} (x,\Psi(x)) &= h(x) & &\text{for} \ x\in(-1,1) 
\end{empheq}
\end{subequations}
if and only if
\begin{align*}
H(x) := g(x) - \frac{1}{\pi}\int_{-1}^1h(t) \sqrt{1+\Psi'(t)^2} \log\sqrt{(x-t)^2 + (\Psi(x)-\Psi(t))^2} \, \mathrm{d}t
\end{align*}
is real analytic on $(-1,1)$.
\end{thm}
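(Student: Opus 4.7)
My plan is to prove both directions of the equivalence by using the single-layer potential on $\Gamma$ to reduce the problem to one with homogeneous Neumann data on $\Gamma$. Define
\[
v(x,y) := \frac{1}{\pi}\int_{-1}^1 h(t)\sqrt{1+\Psi'(t)^2}\,\log\sqrt{(x-t)^2+(y-\Psi(t))^2}\,\mathrm{d}t,
\]
which is harmonic on $\mathbb{R}^2\setminus\Gamma$, continuous across $\Gamma$, and has $v|_\Gamma = g - H$. The first task is to establish the Plemelj-type jump formula
\[
\frac{\partial v}{\partial\N}\bigg|_{\pm}(x_0) = \pm h(x_0) + \mathcal{K}h(x_0),
\]
where $\mathcal{K}$ has kernel
\[
k(x_0,t) = \frac{1}{\pi}\cdot\frac{\sqrt{1+\Psi'(t)^2}}{\sqrt{1+\Psi'(x_0)^2}}\cdot\frac{\Psi(x_0)-\Psi(t)-\Psi'(x_0)(x_0-t)}{(x_0-t)^2+(\Psi(x_0)-\Psi(t))^2}.
\]
Although $k$ appears singular at $t=x_0$, the numerator vanishes to order $(x_0-t)^2$ there, because $\Psi'(x_0)(x_0-t)$ is the leading Taylor term of $\Psi(x_0)-\Psi(t)$; so $k$ extends continuously across the diagonal. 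Factoring $\Psi(x_0)-\Psi(t)=(x_0-t)\phi(x_0,t)$ with $\phi$ real analytic, and analogously for the numerator, shows that $k$ is jointly real analytic on $(-1,1)\times(-1,1)$. Differentiating under the integral sign then yields that $\mathcal{K}h$ is real analytic on $(-1,1)$ for every continuous $h$.

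Given these facts, sufficiency is immediate. Assume $H$ is real analytic. Then $(H,-\mathcal{K}h)$ are real-analytic Cauchy data for the Laplacian on $\Gamma$, which is non-characteristic (the Laplacian has no real characteristics), so the Cauchy--Kovalevskaya theorem produces a harmonic function $w$ on an open neighbourhood of $\Gamma$ satisfying $w|_\Gamma = H$ and $\partial w/\partial\N|_\Gamma = -\mathcal{K}h$. Taking $U$ to be this neighbourhood and setting $u := v + w$ on $U\setminus\overline{\Omega}$ gives $u|_\Gamma = (g-H)+H = g$ and $\partial u/\partial\N|_+ = (h+\mathcal{K}h)+(-\mathcal{K}h) = h$, solving the Cauchy problem.

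For necessity, suppose $u$ solves \eqref{CP2} and set $w := u - v$; it is harmonic on the exterior of $\Omega$ with $w|_\Gamma = H$ and $\partial w/\partial\N|_+ = -\mathcal{K}h$. The real analyticity of $\mathcal{K}h$ lets Cauchy--Kovalevskaya furnish a harmonic $\phi$ on a two-sided neighbourhood of $\Gamma$ with Cauchy data $(0,\mathcal{K}h)$, so that $\widetilde{w} := w + \phi$ is harmonic on the exterior side with Dirichlet data $H$ and vanishing Neumann data on $\Gamma$. Because $\Gamma$ is real analytic, it admits a local conformal flattening to a segment of the real axis; this preserves harmonicity and converts the vanishing Neumann condition to a vanishing Neumann condition on the real axis. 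Even reflection then extends the flattened $\widetilde{w}$ harmonically across the axis, and pulling back (together with patching using Holmgren uniqueness) gives a harmonic extension of $\widetilde{w}$ to a full two-sided neighbourhood of $\Gamma$. This extension is real analytic on $\Gamma$, so $H = \widetilde{w}|_\Gamma$ is real analytic.

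The two main obstacles I anticipate are (i) the rigorous derivation of the jump formula for $\partial v/\partial\N$ and the accompanying proof that $\mathcal{K}h$ is real analytic for $h\in C^1$ only (the former requires a Plemelj-style analysis at a curved analytic boundary, and the latter relies on pushing the analyticity of $\Psi$ into a complex neighbourhood to control $k$), and (ii) the Schwarz-reflection step in the necessity direction, which has to be executed via conformal flattening along the whole of $\Gamma$ and then patched so as to yield a single harmonic extension of $\widetilde{w}$ rather than merely local ones.
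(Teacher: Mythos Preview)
Your sufficiency argument is essentially the paper's: define the single-layer potential (the paper calls it $G$, you call it $v$), verify the jump relation $\partial v/\partial\N|_+ = h + \mathcal{K}h$ with $\mathcal{K}h$ real analytic, then invoke Cauchy--Kovalevskaya for the residual data $(H,-\mathcal{K}h)$ and add. The paper carries out your obstacle (i) in full detail, so you should expect that part to be lengthy but routine.

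Your necessity argument, however, is genuinely different from the paper's and worth noting. The paper does \emph{not} use the single-layer potential for necessity at all; instead it applies Green's second identity directly to the given solution $u$ and the fundamental solution $\Phi_x$ over a region $V_{\delta,\varepsilon}$ bounded by two circular arcs and two sub-arcs of $\Gamma$, then sends $\varepsilon\to 0$. This expresses $H(x)$ as a sum of integrals over the large arc $C_\delta^+$ and a double-layer-type integral over $\Gamma$, each of which is shown to be real analytic via the complexification lemma (Lemma~\ref{ANAINT}). Your route---subtract $v$, use Cauchy--Kovalevskaya on the analytic data $(0,\mathcal{K}h)$ to kill the Neumann trace, then Schwarz-reflect across the analytic curve $\Gamma$---is more symmetric with the sufficiency half and avoids the limiting analysis of the Green's identity boundary terms. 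The trade-off is that you must import the conformal-flattening/reflection machinery and patch the local extensions (your obstacle (ii)), whereas the paper's Green's identity approach is self-contained and elementary once the region $V_{\delta,\varepsilon}$ is set up. Both arguments ultimately lean on the same hard analytic fact, namely the $C^1$-up-to-$\Gamma$ regularity and one-sided Neumann trace of the single-layer potential, so neither is shorter in total; yours is arguably cleaner conceptually, the paper's more hands-on.
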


If we were to take $\Gamma$ to be flat ($\Psi=0$), then this theorem recovers a result due to Hadamard \cite{Had}. In the proof of Theorem \ref{HAD2} we will require the following lemma on the analyticity of functions defined by integrals.
\begin{lem}\label{ANAINT}
Suppose that $U\subset \mathbb{C}$ is open and $I\subset \mathbb{R}$ is a compact interval. If the continuous function $\mathcal{A}\colon U\times I \to \mathbb{C}$ is complex analytic in $z\in U$ for each $x\in I$, then $\int_I \mathcal{A}(z,x) \, \mathrm{d}x$ is complex analytic on $U$. 
\end{lem}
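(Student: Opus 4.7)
The plan is to apply Morera's theorem to the function $F(z) := \int_I \mathcal{A}(z,x)\,\mathrm{d}x$. Two things need checking: continuity of $F$ on $U$, and the vanishing of $\oint_{\partial T} F(z)\,\mathrm{d}z$ for every solid triangle $T$ contained in $U$.

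For continuity, I would fix $z_0 \in U$ and choose a compact neighbourhood $K \subset U$ of $z_0$. Since $\mathcal{A}$ is continuous on $U \times I$ and $K \times I$ is compact, $\mathcal{A}$ is uniformly continuous there. Hence for $z \to z_0$ within $K$, $\mathcal{A}(z,x) \to \mathcal{A}(z_0,x)$ uniformly in $x \in I$, and integrating over the compact interval $I$ gives $F(z) \to F(z_0)$.

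For the Morera condition, fix a triangle $T \subset U$ and consider
\begin{equation*}
\oint_{\partial T} F(z)\,\mathrm{d}z = \oint_{\partial T} \int_I \mathcal{A}(z,x)\,\mathrm{d}x\,\mathrm{d}z.
\end{equation*}
Since $\mathcal{A}$ is continuous on the compact set $\partial T \times I$, Fubini's theorem allows us to swap the order of integration:
\begin{equation*}
\oint_{\partial T} F(z)\,\mathrm{d}z = \int_I \oint_{\partial T} \mathcal{A}(z,x)\,\mathrm{d}z\,\mathrm{d}x.
\end{equation*}
For each fixed $x \in I$, the function $z \mapsto \mathcal{A}(z,x)$ is analytic on $U$, so by Cauchy's theorem the inner contour integral vanishes. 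Thus $\oint_{\partial T} F(z)\,\mathrm{d}z = 0$ for every triangle $T$ in $U$, and Morera's theorem yields that $F$ is analytic on $U$.

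The proof is essentially routine; there is no real obstacle, since everything follows once we observe that compactness of $I$ lets us interchange the integral and the contour integral freely and transfers uniform continuity to $F$. The only technical point worth being careful about is that $\partial T \times I$ is compact and $\mathcal{A}$ is merely assumed continuous (not jointly analytic), which is exactly why Morera, rather than a direct power-series manipulation, is the clean tool to reach for.
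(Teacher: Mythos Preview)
Your proof is correct and follows essentially the same route as the paper: use Fubini to swap the contour and parameter integrals, apply Cauchy's theorem for each fixed $x$, and conclude with Morera. The only difference is that you make the continuity of $F$ explicit, which the paper leaves implicit.
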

\begin{proof}
Let $\Gamma$ be a triangle in $U$. The continuity of $\mathcal{A}$ implies that $\int_I |\mathcal{A}(z,x)| \, \textrm{d}x$ is bounded on $\Gamma$ and so by Fubini's Theorem
\[ \int_\Gamma \int_I \mathcal{A}(z,x) \, \textrm{d}x\,  \textrm{d}z =  \int_I \int_\Gamma \mathcal{A}(z,x)\, \textrm{d}z \, \textrm{d}x = 0.\]
Hence Morera's Theorem \cite[\S5.1]{StSh} implies that $\int_I \mathcal{A}(z,x) \, \textrm{d}x$ is complex analytic on $U$.
\end{proof}
The proof of Theorem \ref{HAD2} follows.
\begin{proof}
We begin by proving that $H$ is necessarily real analytic. Suppose that there exists $u\in C^2(U\setminus \overline{\Omega}) \cap C^1(U\setminus \Omega)$ that solves the Cauchy problem \eqref{CP2}. We aim to show that $H$ is real analytic at a fixed $x_0\in (-1,1)$. To achieve this we construct a region $V_{\delta,\varepsilon}$, shaded in Figure \ref{FIG3} below, and apply a Green's identity \eqref{GI2} over this region to the function $u$ and a constant multiple of the fundamental solution of the Laplacian centred at $(x,\Psi(x))\in\Gamma$. Then we will proceed with analysing the boundary terms of the Green's identity.  

To define the region $V_{\delta,\varepsilon}$ we choose $\delta >0$ small enough such that 
\[\delta<\min\{|x_0+1|,|x_0-1|\}\] 
and the open ball $B_\delta(x_0,\Psi(x_0))$ centred at $(x_0,\Psi(x_0))$ with radius $\delta$ is contained within $U$. If we were to travel anti-clockwise along the circle $\partial B_\delta(x_0,\Psi(x_0))$ starting from the highest point $(x_0,\Psi(x_0)+\delta)$, then eventually we would come into contact with the curve $\Gamma$. Let $x_{0,\delta}^-$ be the $x$-coordinate of the first point of contact. A straightforward compactness argument can be used to prove the existence of such a point. Define $x_{0,\delta}^+$ similarly when the circle is traversed clockwise. We denote by $C_\delta^+$ the arc of $\partial B_\delta(x_0,\Psi(x_0))$ from $(x_{0,\delta}^-,\Psi(x_{0,\delta}^-))$ to $(x_{0,\delta}^+,\Psi(x_{0,\delta}^+))$ that passes through the point $(x_0,\Psi(x_0)+\delta)$.

\begin{figure}[H]
\begin{tikzpicture}[scale=0.95]

\def\a{4.9}
\def\r{3.2}
\def\b{6.3}
\def\rr{0.9}
\def\h{1}

\def\s{4}
\def\A{1.5}

\path [name path=curve] plot [smooth, samples=100, domain=0:10] (\x, {\A*sin(\s*\x r)+\h});
\path [name path=circ1] (\a,{\A*sin(\s*\a r)+\h}) circle [radius=\r];
\path [name path=circ2] (\b,{\A*sin(\s*\b r)+\h}) circle [radius=\rr];

\begin{scope}
\clip (0,6) -- plot [smooth, samples=100, domain=0:10] (\x, {\A*sin(\s*\x r)+\h}) -- (10,6) -- cycle;
\draw [thick, fill=gray!20] (\a,{\A*sin(\s*\a r)+\h}) circle [radius=\r];
\end{scope}

\draw [thick, dotted, name intersections={of=curve and circ1, name=large}] 
(large-1) -- (large-1 |- 0,-1.8)
(large-6) -- (large-6 |- 0,-1.8);

\begin{scope}
\clip (0,5) -- plot [smooth, samples=100, domain=0:10] (\x, {\A*sin(\s*\x r)+\h}) -- (10,5) -- cycle;
\fill [gray!20] (large-1) -- (large-1 |- 0,-2) -- (large-6 |- 0,-2) -- (large-6) -- cycle;
\end{scope}

\draw [thick, dotted, name intersections={of=curve and circ2, name=small}] 
(small-1) -- (small-1 |- 0,-1.8) 
(small-4) -- (small-4 |- 0,-1.8); 

\begin{scope}
\clip (0,5) -- plot [smooth, samples=100, domain=0:10] (\x, {\A*sin(\s*\x r)+\h}) -- (10,5) -- cycle;
\clip  (small-1 |- 0,-2) --  (small-1 |- 0,5) -- (small-4 |- 0,5) -- (small-4 |- 0,-2) -- cycle;    
\draw [thick, fill=white] (\b,{\A*sin(\s*\b r)+\h}) circle [radius=\rr];
\fill [white]  (small-1 |- 0,-2) -- (small-1) -- (small-4) -- (small-4 |- 0,-2) -- cycle;
\end{scope}

\draw [thick] plot [smooth, samples=100, domain=0.5:9.5] (\x, {\A*sin(\s*\x r)+\h}) node [right] {$\Gamma$};

\draw [thick] (0,-2) -- (10,-2);
\draw [thick, dotted] (\a,{\A*sin(\s*\a r)+\h}) -- (\a,-1.8) ;
\draw (\a,-1.8) -- (\a,-2.2) node [below] {$x_0$};
\draw [thick, dotted] (\b,{\A*sin(\s*\b r)+\h}) -- (\b,-1.8);
\draw (\b,-1.8) -- (\b,-2.2) node [below] {$x$};

\draw (large-1 |- 0,-1.8) --  (large-1 |- 0,-2.2) node [below=-0.1cm] {$x_{0,\delta}^-$}; 
\draw (large-6 |- 0,-1.8) --  (large-6 |- 0,-2.2) node [below=-0.1cm] {$x_{0,\delta}^+$}; 

\draw (small-1 |- 0,-1.8) --  (small-1 |- 0,-2.2) node [below=-0.1cm] {$x_\varepsilon^-$}; 
\draw (small-4 |- 0,-1.8) --  (small-4 |- 0,-2.2) node [below right=-0.1cm] {$x_\varepsilon^+$};

\node [above] at ($(\a,{\A*sin(\s*\a r)+\h})+(90:\r)$) {$C_\delta^+$};
\node [above] at ($(\b,{\A*sin(\s*\b r)+\h})+(110:\rr)$) {$C_\varepsilon^-$};
\node [left] at (3.2,{\A*sin(\s*3.2 r)+\h}) {$L_{\delta,\varepsilon}^-$};
\node [right] at (7,{\A*sin(\s*7 r)+\h}) {$L_{\delta,\varepsilon}^+$};
\node at (4.5,2.5+\h) {$V_{\delta,\varepsilon}$};

\draw (0.5,-1.8) -- (0.5,-2.2) node [below] {$-1$};
\draw (9.5,-1.8) -- (9.5,-2.2) node [below] {$1$};

\draw [thick, dashed] plot [smooth] coordinates {(0.5,{\A*sin(\s*0.5 r)+\h}) (1,6+\h) (3,5+\h) (6,6.5+\h) (9,5+\h) (9.5,{\A*sin(\s*9.5 r)+\h})};

\node at (8,5+\h) {$U\setminus \overline{\Omega}$};
\node at (3,-2.5+\h) {$\Omega$};

\draw [fill] (\a,{\A*sin(\s*\a r)+\h}) circle [radius=2pt];
\draw [fill] (\b,{\A*sin(\s*\b r)+\h}) circle [radius=2pt];

\draw [dashed] (\a,{\A*sin(\s*\a r)+\h}) circle [radius=\r];
\draw [dashed] (\b,{\A*sin(\s*\b r)+\h}) circle [radius=\rr];

\end{tikzpicture}
\centering
\caption{Diagram of the region $V_{\delta,\varepsilon}$.}
\label{FIG3}
\end{figure}
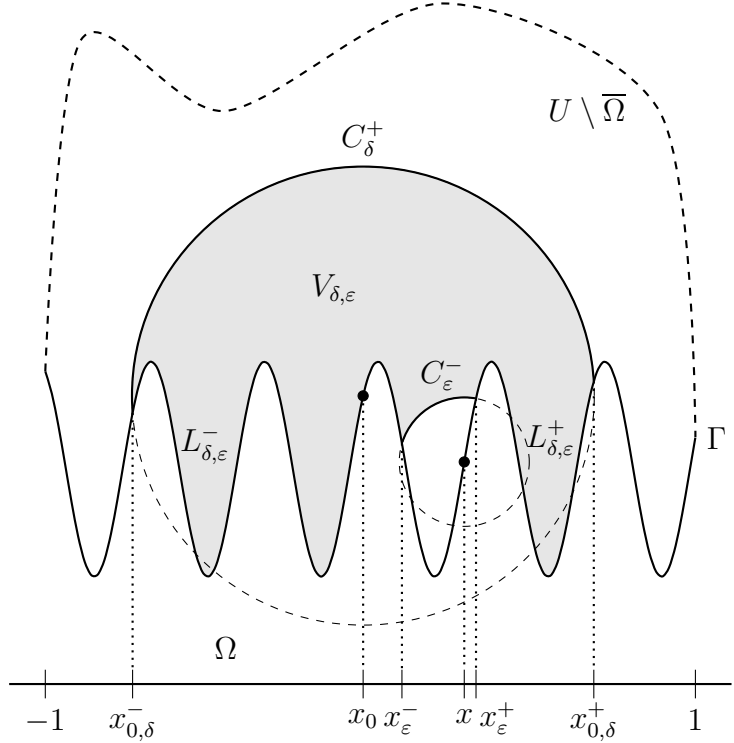

Let $x\in (x_{0,\delta}^-,x_{0,\delta}^+)$ and define $\Phi_x \colon \mathbb{R}^2\setminus \{(x,\Psi(x))\} \to \mathbb{R}$ to be the following constant multiple of the fundamental solution of the Laplacian, 
\[ \Phi_x(\xi,\eta):= \log |(x,\Psi(x))-(\xi,\eta)| = \log \sqrt{(x-\xi)^2+(\Psi(x)-\eta)^2}. \]
To avoid the singularity of $\Phi_x$ we need to cut out a region around the point $(x,\Psi(x))$. Given $0<\varepsilon <\min\{|x-x_{0,\delta}^-|,|x-x_{0,\delta}^+|\}$ we find $x_\varepsilon^-$, $x_\varepsilon^+$, $C_\varepsilon^-$ from $x$ in the same way as we constructed $x_{0,\delta}^-$, $x_{0,\delta}^+$, $C_\delta^+$ from $x_0$. We now define the curves
\[ L_{\delta,\varepsilon}^- := \{(t,\Psi(t)) : t\in [x_{0,\delta}^-, x_\varepsilon^-] \}  \]
and
\[ L_{\delta,\varepsilon}^+ := \{(t,\Psi(t)) : t\in [x_\varepsilon^+, x_{0,\delta}^+] \}, \]
which are the segments of $\Gamma$ connecting $C_\varepsilon^-$ and $C_\delta^+$.
Then we define the open set $V_{\delta,\varepsilon}$, shaded in Figure \ref{FIG3}, as the region contained in $U\setminus\overline{\Omega}$ bounded by the curves $L_{\delta,\varepsilon}^-$, $C_\varepsilon^-$, $L_{\delta,\varepsilon}^+$, and $C_\delta^+$. 

Due to the way we have constructed $V_{\delta,\varepsilon}$, it has a piecewise $C^2$ boundary $\partial V_{\delta,\varepsilon}$. To apply a Green's identity over $V_{\delta,\varepsilon}$, we will need to translate the set slightly upwards since we are not assuming that $u$ is $C^2$ up to $\Gamma$. Let $\rho>0$ be small enough such that $V_{\delta,\varepsilon,\rho} := V_{\delta,\varepsilon} + (0,\rho)$ is contained within $U\setminus\overline{\Omega}$, which means that $u,\Phi_x \in C^2(\overline{V_{\delta,\varepsilon,\rho}})$. We can then apply a Green's identity to obtain
\begin{equation}\label{GI2}
\int_{\partial V_{\delta,\varepsilon,\rho}} \left( u \frac{\partial\Phi_x}{\partial\nu} - \Phi_x \frac{\partial u}{\partial\nu} \right) \, \textrm{d}s = \int_{V_{\delta,\varepsilon,\rho}} (u\Delta \Phi_x - \Phi_x\Delta u) \, \textrm{d}A,
\end{equation}
where $\nu$ is the unit outward normal to $V_{\delta,\varepsilon,\rho}$. Since both $u$ and $\Phi_x$ are harmonic in $V_{\delta,\varepsilon,\rho}$, the right-hand side of equation \eqref{GI2} vanishes. Hence as the integrand on the left-hand side is continuous up to $\Gamma$, we can take the limit as $\rho$ tends to zero to obtain
\begin{equation}\label{GI3}
 \int_{\partial V_{\delta,\varepsilon}} \left( u \frac{\partial\Phi_x}{\partial\nu} - \Phi_x \frac{\partial u}{\partial\nu} \right) \, \textrm{d}s = 0. 
\end{equation}

We now split the above integral into four, using the different pieces of boundary $\partial V_{\delta,\varepsilon}$, and separately evaluate their limiting values as $\varepsilon \to 0$. When analysing the integrals over $C_\varepsilon^-$, we will need to know the limiting value of $|C_\varepsilon^-|/\varepsilon$ as $\varepsilon\to 0$ where $|C_\varepsilon^-|$ is the length of $C_\varepsilon^-$. Notice that the manner in which we constructed the arc $C_\varepsilon^-$, implies that
\begin{align*}
\frac{|C_\varepsilon^-|}{\varepsilon} &= \pi  - \tan^{-1} \left( \frac{\Psi(x_\varepsilon^+) -\Psi(x)}{x_\varepsilon^+ - x} \right) + \tan^{-1} \left( \frac{\Psi(x_\varepsilon^-) -\Psi(x)}{x_\varepsilon^- - x} \right).
\end{align*}
It follows $|C_\varepsilon^-|/\varepsilon \to \pi$ as $\varepsilon\to 0$ because $x_\varepsilon^-\to x$ and $x_\varepsilon^+\to x$ as $\varepsilon\to 0$. Therefore, since $\frac{\partial\Phi_x}{\partial\nu}=-1/\varepsilon$ on $C_\varepsilon^-$, we have
\[  \int_{C_\varepsilon^-} u \frac{\partial\Phi_x}{\partial\nu} \, \textrm{d}s  = -\frac{1}{\varepsilon}\int_{C_\varepsilon^-}  u \, \textrm{d}s = -\frac{|C_\varepsilon^-|}{\varepsilon}\frac{1}{|C_\varepsilon^-|}\int_{C_\varepsilon^-} u \, \textrm{d}s \to -\pi g(x) \]
as $\varepsilon \to 0$ using the boundary condition \eqref{DIR2}. Furthermore, if we let $K$ be a compact neighbourhood of $(x,\Psi(x))$ in $U\setminus \Omega$, then once $\varepsilon$ is small enough we have
\begin{align*}
\left|  \int _{C_\varepsilon^-} \Phi_x \frac{\partial u}{\partial\nu} \, \textrm{d}s \right| &\leq \Big( \sup_{K} |\nabla u| \Big) \int_{C_\varepsilon^-} |\Phi_x| \, \textrm{d}s \\
&=  \Big(\sup_{K} |\nabla u | \Big) \frac{|C_\varepsilon^-|}{\varepsilon} \varepsilon |\log(\varepsilon)| \\
&\to 0
\end{align*}
as $\varepsilon\to 0$. As a result, this term will not contribute to the limiting value of equation \eqref{GI3}.   

Next, we evaluate the integrals over $L_{\delta,\varepsilon}^-$ and $L_{\delta,\varepsilon}^+$. Firstly,
\begin{align*}
\int_{L_{\delta,\varepsilon}^-} \Phi_x \frac{\partial u}{\partial\nu}  \, \textrm{d}s &= \int_{x_{0,\delta}^-}^{x_\varepsilon^-} \Phi_x(t,\Psi(t)) \frac{\partial u}{\partial\nu} (t,\Psi(t)) \sqrt{1+\Psi'(t)^2} \, \textrm{d}t \\
&= -\int_{x_{0,\delta}^-}^{x_\varepsilon^-} h(t) \sqrt{1+\Psi'(t)^2} \log \sqrt{(x-t)^2+(\Psi(x)-\Psi(t))^2}  \, \textrm{d}t.
\end{align*}
We now show that when taking the limit of this expression as $\varepsilon\to 0$, the upper limit of the integral changes to $x$. Using the existence of $\phi\in C([-1,1]^2)$ satisfying 
\[\Psi(x)-\Psi(t) = \phi(x,t)(x-t), \]
we have
\begin{align*}
&\left| \int_{x_\varepsilon^-}^{x} h(t) \sqrt{1+\Psi'(t)^2} \log \sqrt{(x-t)^2+(\Psi(x)-\Psi(t))^2}  \, \textrm{d}t \right| \\
&\qquad \leq \int_{x_\varepsilon^-}^{x} |h(t)| \sqrt{1+\Psi'(t)^2} \left|\log|x-t| + \log\sqrt{1+\phi(x,t)^2}\right|  \, \textrm{d}t \\
&\qquad \leq \sup_{[-1,1]} \Big(|h| \sqrt{1+\Psi'^2}\Big) \int_{x-\varepsilon}^{x} \big|\log|x-t|\big| + \log\sqrt{1+\phi(x,t)^2}  \, \textrm{d}t \\
&\qquad \to 0
\end{align*}
as $\varepsilon\to 0$ where we have used the fact that $\log|x-\cdot|\in L^1([-1,1])$. Therefore,
\[ \int_{L_{\delta,\varepsilon}^-} \Phi_x \frac{\partial u}{\partial\nu}  \, \textrm{d}s \to -\int_{x_{0,\delta}^-}^{x} h(t) \sqrt{1+\Psi'(t)^2} \log \sqrt{(x-t)^2+(\Psi(x)-\Psi(t))^2}  \, \textrm{d}t \]
as $\varepsilon\to 0$. Similarly we have
\[ \int_{L_{\delta,\varepsilon}^+} \Phi_x \frac{\partial u}{\partial\nu}  \, \textrm{d}s \to -\int_x^{x_{0,\delta}^+} h(t) \sqrt{1+\Psi'(t)^2} \log \sqrt{(x-t)^2+(\Psi(x)-\Psi(t))^2}  \, \textrm{d}t \]
as $\varepsilon\to 0$. 

We also have
\[  \int_{L_{\delta,\varepsilon}^-} u \frac{\partial\Phi_x}{\partial\nu} \, \textrm{d}s = \int_{x_{0,\delta}^-}^{x_\varepsilon^-} u(t,\Psi(t)) \left[ \frac{\Psi(x)-\Psi(t)-\Psi'(t)(x-t)}{(x-t)^2+(\Psi(x)-\Psi(t))^2} \right] \, \textrm{d}t.  \]
Notice that by using the fact that $\Psi$ is twice continuously differentiable, the integrand has a continuous extension to $t=x$. Consequently, since $u=g$ on $\Gamma$, we have
\[   \int_{L_{\delta,\varepsilon}^-} u \frac{\partial\Phi_x}{\partial\nu} \, \textrm{d}s \to \int_{x_{0,\delta}^-}^{x} g(t) \left[ \frac{\Psi(x)-\Psi(t)-\Psi'(t)(x-t)}{(x-t)^2+(\Psi(x)-\Psi(t))^2} \right] \, \textrm{d}t \]
as $\varepsilon \to 0$, and similarly 
\[ \int_{L_{\delta,\varepsilon}^+} u \frac{\partial\Phi_x}{\partial\nu} \, \textrm{d}s \to \int_x^{x_{0,\delta}^+} g(t) \left[ \frac{\Psi(x)-\Psi(t)-\Psi'(t)(x-t)}{(x-t)^2+(\Psi(x)-\Psi(t))^2} \right] \, \textrm{d}t \]
as $\varepsilon\to0$. Substituting everything into equation \eqref{GI3} and then taking the limit as $\varepsilon\to 0$ results in 
\begin{equation}\label{EQ1}
\begin{split}
&g(x) - \frac{1}{\pi}\int_{x_{0,\delta}^-}^{x_{0,\delta}^+} h(t) \sqrt{1+\Psi'(t)^2} \log \sqrt{(x-t)^2+(\Psi(x)-\Psi(t))^2}  \, \textrm{d}t \\
&= \frac{1}{\pi}\int_{C_\delta^+} \left( u \frac{\partial\Phi_x}{\partial\nu} - \Phi_x \frac{\partial u}{\partial\nu} \right) \, \textrm{d}s + \frac{1}{\pi}\int_{x_{0,\delta}^-}^{x_{0,\delta}^+} g(t) \left[ \frac{\Psi(x)-\Psi(t)-\Psi'(t)(x-t)}{(x-t)^2+(\Psi(x)-\Psi(t))^2} \right] \, \textrm{d}t.
\end{split}
\end{equation}
We would like to show that the right-hand side of this equation is real analytic at $x=x_0$. 

We start by noting that Taylor's Theorem with integral remainder yields
\[  \Psi(x) = \Psi(t) + \int_0^1 \Psi'((1-\tau)x +\tau t) \, \textrm{d}\tau (x-t) \]
and
\[ \Psi(x) = \Psi(t) + \Psi'(t)(x-t) + \int_0^1 \Psi''((1-\tau)x +\tau t)\tau \, \textrm{d}\tau (x-t)^2. \] 
Therefore, by setting
\begin{align*}
\mathcal{A}(x,t) &:= \int_0^1 \Psi'((1-\tau)x +\tau t) \, \textrm{d}\tau, \\
\mathcal{B}(x,t) &:= \int_0^1 \Psi''((1-\tau)x +\tau t)\tau \, \textrm{d}\tau,
\end{align*} 
we can write
\[ \frac{\Psi(x)-\Psi(t)-\Psi'(t)(x-t)}{(x-t)^2+(\Psi(x)-\Psi(t))^2} = \frac{\mathcal{B}(x,t)}{1+\mathcal{A}(x,t)^2}. \]

Since $\Psi$ is real analytic, there exists an open neighbourhood of $[-1,1]$ in the complex plane $\mathbb{C}$ where $\Psi$ is complex analytic. We can then choose $a>0$ small enough such that the closure of 
\[ R:= {\{ z\in\mathbb{C} : \RE (z)\in(-1,1), \IM (z)\in (-a,a) \}} \] 
lies within the region where $\Psi$ is complex analytic. Thus the expressions $\Psi'((1-\tau)z +\tau t)$ and $ {\Psi''((1-\tau)z +\tau t)\tau}$ are complex analytic in $z\in R$ for all $t\in [-1,1]$, $\tau\in [0,1]$. Hence, we can use Lemma \ref{ANAINT} to guarantee that both $\mathcal{A}(z,t)$ and $\mathcal{B}(z,t)$ are complex analytic on $R$ for all $t\in [-1,1]$. We now choose $a$ small enough such that $|\IM(\Psi'(z))|<1/2$ for all $z\in R$, to acquire the bound
\begin{align*}
|1+\mathcal{A}(z,t)^2| &\geq |\RE(1+\mathcal{A}(z,t)^2)| \\
&= 1 + (\RE\mathcal{A}(z,t))^2 - (\IM\mathcal{A}(z,t))^2 \\
&\geq \frac{3}{4}  
\end{align*} 
for $z\in R$ and $t\in[-1,1]$. Then $ \frac{\mathcal{B}(z,t)}{1+\mathcal{A}(z,t)^2}$ is complex analytic on $R$ for each $t\in [-1,1]$. Once again we can apply Lemma \ref{ANAINT}, this time to justify the complex analyticity of
\[ \int_{x_{0,\delta}^-}^{x_{0,\delta}^+} g(t) \frac{\mathcal{B}(z,t)}{1+\mathcal{A}(z,t)^2} \, \mathrm{d}t \]
on $R$. It follows that
\begin{equation}\label{ANAINT2}
\int_{x_{0,\delta}^-}^{x_{0,\delta}^+} g(t) \left[ \frac{\Psi(x)-\Psi(t)-\Psi'(t)(x-t)}{(x-t)^2+(\Psi(x)-\Psi(t))^2} \right] \, \mathrm{d}t 
\end{equation}
is real analytic at $x=x_0$.

We still need to show that
\[ \int_{C_\delta^+} \left( u \frac{\partial\Phi_x}{\partial\nu} - \Phi_x \frac{\partial u}{\partial\nu} \right) \, \mathrm{d}s \]
is real analytic at $x=x_0$. For some $\theta_1<\theta_2$ depending on $\delta$, we have
\[  \int_{C_\delta^+} \Phi_x \frac{\partial u}{\partial\nu} \, \mathrm{d}s = \int_{\theta_1}^{\theta_2} \frac{\delta}{2}\log (\mathcal{E}(x,t)) \frac{\partial u}{\partial\nu} (x_0+\delta\cos t, \Psi(x_0)+\delta\sin t ) \, \mathrm{d}t\]
where $\mathrm{d}s = \delta \, \mathrm{d}t$ and
\[ \mathcal{E}(x,t) = (x-x_0 -\delta\cos t)^2 + (\Psi(x)-\Psi(x_0)-\delta\sin t)^2. \]
Let $0<r<\frac{\delta}{3\sqrt{2}}$ be small enough such that the complex disc 
\[ D_r(x_0) = \{ z\in\mathbb{C}:|z-x_0|<r\} \] 
lies within the region of complex analyticity of $\Psi$ and $|\Psi(z)-\Psi(x_0)|< \frac{\delta}{3\sqrt{2}}$ for all $z\in D_r(x_0)$. Then, because for all $t\in[\theta_1,\theta_2]$ either $|\cos t|>\frac{1}{\sqrt{2}}$ or $|\sin t|>\frac{1}{\sqrt{2}}$, it follows that
\begin{align*}
\RE \mathcal{E}(z,t) &= (\RE(z-x_0 -\delta\cos t))^2 - (\IM(z-x_0 -\delta\cos t))^2 \\
& \qquad + (\RE(\Psi(z)-\Psi(x_0)-\delta\sin t))^2 - (\IM(\Psi(z)-\Psi(x_0)-\delta\sin t))^2 \\
&\geq \left( \frac{\delta}{\sqrt{2}}-  \frac{\delta}{3\sqrt{2}} \right)^2 - (\IM(z-x_0))^2  - (\IM(\Psi(z)-\Psi(x_0)))^2 \\
&\geq \frac{2\delta^2}{9} - 2\left( \frac{\delta}{3\sqrt{2}}\right)^2 \\
&= \frac{\delta^2}{9}.
\end{align*}
Now if we take $\log$ to be the principle value complex logarithm defined away from the negative real axis, then $\log(\mathcal{E}(z,t))$ is complex analytic on $D_r(x_0)$ for all $t\in [\theta_1,\theta_2]$. Hence by Lemma \ref{ANAINT}, $ \int_{C_\delta^+} \Phi_z \frac{\partial u}{\partial\nu} \, \mathrm{d}s $ is complex analytic for $z\in D_r(x_0)$ and so real analytic at the point of interest $z=x_0$. Similar arguments can be employed to show that
\begin{gather*}
\int_{C_\delta^+} u \frac{\partial\Phi_x}{\partial\nu} \, \mathrm{d}s, \\
\int_{-1}^{x_{0,\delta}^-} h(t) \sqrt{1+\Psi'(t)^2} \log \sqrt{(x-t)^2+(\Psi(x)-\Psi(t))^2}  \, \mathrm{d}t, \\
\int_{x_{0,\delta}^+}^1 h(t) \sqrt{1+\Psi'(t)^2} \log \sqrt{(x-t)^2+(\Psi(x)-\Psi(t))^2}  \, \mathrm{d}t 
\end{gather*}
are all real analytic at $x=x_0$. The above together with equation \eqref{EQ1} conclude our proof that $H$ is real analytic at $x_0$ and thus the entirety of $(-1,1)$, since $x_0$ was arbitrary. \bigbreak

We now prove the sufficiency of $H$ being real analytic on $(-1,1)$. Let $U_1:= (-1,1)\times \mathbb{R}$ and consider $G\colon U_1 \setminus \Omega \to \mathbb{R}$ defined by
\[ G(x,y) :=  \frac{1}{\pi}\int_{-1}^1 h(t) \sqrt{1+\Psi'(t)^2} \log\sqrt{(x-t)^2 + (y-\Psi(t))^2} \, \mathrm{d}t. \]
Notice that the integrand
\[ h(t) \sqrt{1+\Psi'(t)^2}\log\sqrt{(x-t)^2 + (y-\Psi(t))^2} \] 
and all its partial derivatives with respect to $x$ and $y$ are continuous in $(x,y,t) \in (U_1\setminus \overline{\Omega}) \times [-1,1]$. We can thus interchange integral and partial derivative to justify $G$ belonging to $C^\infty(U_1\setminus \overline{\Omega})$. Furthermore, $\Delta G =0$ which suggests, as we will come to discover, that $G$ can be used to construct a solution $u$ to the Cauchy problem \eqref{CP2}. We remark that if $h=0$, then this step can be skipped as $G=0$. Therefore, for this part of the proof we can assume $h\neq0$, and we will find useful to do so.  

We will now show that $G$ is continuous up to $\Gamma$. Let $x_0\in (-1,1)$ and $(x,y) \in U_1\setminus \overline{\Omega}$. We use $f\lesssim g$ to denote the existence of a constant $C$ such that $f\leq Cg$. Observe
\begin{align}
&|G(x,y) - G(x_0,\Psi(x_0)) | \nonumber\\
&\qquad = \left| \frac{1}{\pi}\int_{-1}^1 h(t) \sqrt{1+\Psi'(t)^2} \log\sqrt{(x-t)^2 + (y-\Psi(t))^2} \, \mathrm{d}t \right. \nonumber\\
&\qquad \qquad \left. -\frac{1}{\pi}\int_{-1}^1 h(t) \sqrt{1+\Psi'(t)^2} \log\sqrt{(x_0-t)^2 + (\Psi(x_0)-\Psi(t))^2} \, \mathrm{d}t \right| \nonumber\\
&\qquad \lesssim \int_{-1}^1 \left| \log ((x-t)^2 + (y-\Psi(t))^2) - \log((x_0-t)^2 + (\Psi(x_0)-\Psi(t))^2)\right| \, \mathrm{d}t \nonumber\\
&\qquad \leq \int_{-1}^1 \left| \log( (x-t)^2) - \log((x_0-t)^2) \right| \, \mathrm{d}t \label{INT1} \\
&\qquad \qquad + \int_{-1}^1 \left| \log \left(1 + \frac{(y-\Psi(t))^2}{(x-t)^2}\right) - \log\left(1 + \frac{(\Psi(x_0)-\Psi(t))^2}{(x_0-t)^2}\right)\right| \, \mathrm{d}t. \label{INT2}
\end{align}
The integral in \eqref{INT1} converges to zero as $(x,y)\to (x_0,\Psi(x_0))$ because the $L^1$ norm is continuous with respect to translations. It remains to prove the integral in \eqref{INT2} also tends to zero. We carry out the substitution $s=x-t$, and by supposing $(x,y)$ is sufficiently close to $(x_0,\Psi(x_0))$, there exists constants $C_1,C_2>1$ such that
\[ 1+ \frac{(y-\Psi(x-s))^2}{s^2} \leq \frac{C_1}{s^2} \qquad \text{and} \qquad 1 + \frac{(\Psi(x_0)-\Psi(x-s))^2}{(x_0-(x-s))^2} \leq C_2 \]
for almost every $s\in [x-1,x+1]$. We then dominate the integrand as follows
\begin{align*}
&\chi_{[x-1,x+1]} (s) \left| \log\left( 1+ \frac{(y-\Psi(x-s))^2}{s^2}\right) - \log\left(1 + \frac{(\Psi(x_0)-\Psi(x-s))^2}{(x_0-(x-s))^2}\right)\right| \\
&\qquad \leq |\log(C_1/s^2)| + \log C_2\\
&\qquad \leq \log C_1 + \log C_2 +2\big|\log|s| \big|,
\end{align*}
which lies within $L^1([-2,2])$. By applying the Dominated Converge Theorem we finish our proof that $|G(x,y) - G(x_0,\Psi(x_0)) | \to 0$ as $(x,y) \to (x_0,\Psi(x_0))$. Notice that we have shown $G(x,\Psi(x)) = g(x) - H(x)$ on $(-1,1)$. 

We will also show that the first order derivatives of $G$ can be continuously extended to $\Gamma$. We start by observing that
\begin{align}
\nabla G (x,y) \cdot \N (x) &= \left( \frac{\partial G}{\partial x} (x,y), \frac{\partial G}{\partial y} (x,y)\right) \cdot \frac{1}{\sqrt{1+\Psi'(x)^2}} (-\Psi'(x),1) \nonumber \\
&=  \frac{1}{\sqrt{1+\Psi'(x)^2}} \left( \frac{\partial G}{\partial y} (x,y) -\Psi'(x) \frac{\partial G}{\partial x} (x,y) \right) \label{GDOTN}
\end{align} 
for $(x,y)\in U_1\setminus \overline{\Omega}$. We aim to show the right-hand side of equation \eqref{GDOTN} is continuous up to $\Gamma$. We can write
\begin{align*}
\frac{\partial G}{\partial y} (x,y) -\Psi'(x) \frac{\partial G}{\partial x} (x,y) = \frac{1}{\pi} \int_{-1}^1  h(t) \sqrt{1+\Psi'(t)^2} \left[\frac{y-\Psi(t)-\Psi'(x)(x-t)}{(x-t)^2+(y-\Psi(t))^2}\right] \, \mathrm{d}t. 
\end{align*} 
Note
\begin{align*}
&\frac{y-\Psi(t)-\Psi'(x)(x-t)}{(x-t)^2+(y-\Psi(t))^2} \\
&\qquad = \frac{y-\Psi(x)}{(x-t)^2+(y-\Psi(t))^2} + \frac{\Psi(x) - \Psi(t) - \Psi'(x)(x-t)}{(x-t)^2+(y-\Psi(t))^2} \\
&\qquad = \frac{y-\Psi(x)}{(x-t)^2+[y-\Psi(x)+\phi(x,t)(x-t)]^2} + \frac{\Psi(x) - \Psi(t) - \Psi'(x)(x-t)}{(x-t)^2+(y-\Psi(t))^2}, 
\end{align*}
where
\[ \phi(x,t) = \Psi'(x) - \int_0^1 \Psi''((1-\tau)t +\tau x) \tau\, \mathrm{d}\tau (x-t). \]
We therefore define
\[ \mathcal{I}_1(x,y,t) := \frac{y-\Psi(x)}{(x-t)^2+[y-\Psi(x) + \phi(x,t)(x-t)]^2} \]
and
\[ \mathcal{I}_2(x,y,t) := \frac{\Psi(x) - \Psi(t) - \Psi'(x)(x-t)}{(x-t)^2+(y-\Psi(t))^2}.  \]

We first investigate the limit of the integral $\frac{1}{\pi}\int_{-1}^1 h(t) \sqrt{1+\Psi'(t)^2} \, \mathcal{I}_1(x,y,t) \, \mathrm{d}t$ as $(x,y) \to (x_0,\Psi(x_0))$. The difference between $\mathcal{I}_1$ and
\[ \widetilde{\mathcal{I}}_1(x,y,t) :=  \frac{y-\Psi(x)}{(x-t)^2+[y-\Psi(x) + \phi(x,x)(x-t)]^2} \]
is
\begin{align*}
&\big| \mathcal{I}_1(x,y,t) - \widetilde{\mathcal{I}}_1(x,y,t) \big| \\
&= \frac{\big|(y-\Psi(x))(\phi(x,x)^2-\phi(x,t)^2)(x-t)^2 + 2(\phi(x,x)-\phi(x,t))(y-\Psi(x))^2(x-t)\big|}{\big|(x-t)^2+[y-\Psi(x) + \phi(x,t)(x-t)]^2\big|\big|(x-t)^2+[y-\Psi(x) + \phi(x,x)(x-t)]^2\big|} \\
&\leq \frac{\big|\int_0^1 \Psi''((1-\tau)t +\tau x) \tau\, \mathrm{d}\tau (\phi(x,x) +\phi(x,t))(y-\Psi(x))(x-t)^3\big|}{\big|(x-t)^2+[y-\Psi(x) + \phi(x,t)(x-t)]^2\big|\big|(x-t)^2+[y-\Psi(x) + \phi(x,x)(x-t)]^2\big|} \\
& \qquad +\frac{2\big|\int_0^1 \Psi''((1-\tau)t +\tau x) \tau\, \mathrm{d}\tau(y-\Psi(x))^2(x-t)^2\big|}{\big|(x-t)^2+[y-\Psi(x) + \phi(x,t)(x-t)]^2\big|\big|(x-t)^2+[y-\Psi(x) + \phi(x,x)(x-t)]^2\big|} \\
& \lesssim  \frac{ |y-\Psi(x)||x-t|}{(x-t)^2+[y-\Psi(x) + \phi(x,x)(x-t)]^2} \\
& \qquad +\frac{ |y-\Psi(x)|^2|x-t|^2}{\big|(x-t)^2+[y-\Psi(x) + \phi(x,t)(x-t)]^2\big|\big|(x-t)^2+[y-\Psi(x) + \phi(x,x)(x-t)]^2\big|}. 
\end{align*}
We can bound this above using
\begin{align*}
&\frac{ |y-\Psi(x)||x-t|}{(x-t)^2+[y-\Psi(x) + \phi(x,x)(x-t)]^2} \\
&\qquad \leq \frac{|y-\Psi(x)+\phi(x,x)(x-t)||x-t|}{(x-t)^2+[y-\Psi(x) + \phi(x,x)(x-t)]^2} + \frac{|\phi(x,x)||x-t|^2}{(x-t)^2+[y-\Psi(x) + \phi(x,x)(x-t)]^2} \\
&\qquad \leq 1 + |\phi(x,x)| \\
&\qquad \leq 1 +\sup_{x\in[-1,1]} |\phi(x,x)|
\end{align*}
and the similar estimate
\begin{align*}
\frac{ |y-\Psi(x)||x-t|}{(x-t)^2+[y-\Psi(x) + \phi(x,t)(x-t)]^2} \leq 1 +\sup_{(x,t)\in[-1,1]^2} |\phi(x,t)|.
\end{align*}
Overall, $\big| \mathcal{I}_1(x,y,t) - \widetilde{\mathcal{I}}_1(x,y,t) \big|$ is bounded on $ (U_1\setminus \overline{\Omega}) \times [-1,1]$.  Seeing that $\mathcal{I}_1(x,y,t) \to 0$ and $\widetilde{\mathcal{I}}_1(x,y,t) \to 0$ as $(x,y) \to (x_0,\Psi(x_0))$ for almost every $t\in[-1,1]$, justifies being able to apply the Dominated Convergence Theorem to obtain
\begin{equation}\label{LIMIT1}
\int_{-1}^1 h(t) \sqrt{1+\Psi'(t)^2} \left( \mathcal{I}_1(x,y,t) - \widetilde{\mathcal{I}}_1(x,y,t) \right) \, \mathrm{d}t \to 0 
\end{equation}
as $(x,y) \to (x_0,\Psi(x_0))$.

Hence we can now focus our attention on the integral of $\widetilde{\mathcal{I}}_1$. After substituting $\phi(x,x) = \Psi'(x)$ and some rearrangement we find
\[ \widetilde{\mathcal{I}}_1(x,y,t) = \frac{1}{y-\Psi(x)} \cdot \frac{1+\Psi'(x)^2}{\left((1+\Psi'(x)^2)\frac{x-t}{y-\Psi(x)} +\Psi'(x)\right)^2 +1}, \] 
which when integrated over $\mathbb{R}$ yields
\begin{align*}
\int_{-\infty}^\infty \widetilde{\mathcal{I}}_1(x,y,t) \, \mathrm{d}t &= \int_{-\infty}^\infty \frac{1+\Psi'(x)^2}{\left[(1+\Psi'(x)^2)t +\Psi'(x)\right]^2 +1} \, \mathrm{d}t \\
&=  \tan^{-1}\big((1+\Psi'(x)^2)t+\Psi'(x)\big) \Big|_{t=-\infty}^{t=\infty} \\
&= \pi.
\end{align*}
We can use this fact to show $\widetilde{\mathcal{I}}_1$ behaves like an approximation to the identity as $(x,y)\to (x_0,\Psi(x_0))$. Given $\varepsilon>0$, there exists $0<\eta<\min\{|x_0-1|,|x_0+1|\}$ small enough such that
\[ \big|h(t) \sqrt{1+\Psi'(t)^2} - h(x_0) \sqrt{1+\Psi'(x_0)^2} \big| < \frac{\varepsilon}{2} \]
whenever $|t-x_0|<\eta$. Now suppose $(x,y) \in U_1\setminus \overline{\Omega}$ satisfies 
\[ |(x,y) - (x_0,\Psi(x_0))|< \eta/2.\] 
Then
\begin{align*}
\int_{\mathbb{R}\setminus B_\eta(x_0)} \widetilde{\mathcal{I}}_1(x,y,t) \, \mathrm{d}t &\leq \int_{\mathbb{R}\setminus B_{\frac{\eta}{2}}(x)} \widetilde{\mathcal{I}}_1(x,y,t) \, \mathrm{d}t \\
&= \int_{\mathbb{R}\setminus B_{\frac{\eta}{2(y-\Psi(x))}}(0)}  \frac{1+\Psi'(x)^2}{\left[(1+\Psi'(x)^2)t +\Psi'(x)\right]^2 +1} \, \mathrm{d}t \\
&= \pi -  \tan^{-1}\left((1+\Psi'(x)^2)\frac{\eta}{2(y-\Psi(x))}+\Psi'(x)\right) \\
&\qquad + \tan^{-1}\left(-(1+\Psi'(x)^2)\frac{\eta}{2(y-\Psi(x))}+\Psi'(x)\right)\\
& \to 0 
\end{align*}
as $(x,y) \to (x_0,\Psi(x_0))$ so there exists $\delta \in (0,\eta/2)$ such that
\[ \int_{\mathbb{R}\setminus B_\eta (x_0)} \widetilde{\mathcal{I}}_1(x,y,t) \, \mathrm{d}t  < \frac{\pi\varepsilon}{4\sup_{[-1,1]} \left| h \sqrt{1+{\Psi'}^2} \right|} \]
for $|(x,y) - (x_0,\Psi(x_0))|< \delta$. The right hand side is well defined since we are assuming $h\neq 0$. When putting these inequalities together, we have
\begin{align*}
& \left| \frac{1}{\pi}\int_{-1}^1 h(t) \sqrt{1+\Psi'(t)^2}\,  \widetilde{\mathcal{I}}_1(x,y,t) \, \mathrm{d}t - h(x_0) \sqrt{1+\Psi'(x_0)^2} \right| \\
&\qquad = \left| \frac{1}{\pi}\int_{-\infty}^\infty \left(\chi_{[-1,1]}(t) h(t)  \sqrt{1+\Psi'(t)^2}- h(x_0) \sqrt{1+\Psi'(x_0)^2}\right)  \widetilde{\mathcal{I}}_1(x,y,t) \, \mathrm{d}t  \right| \\
&\qquad \leq \frac{1}{\pi}\int_{B_\eta(x_0)} \left| h(t) \sqrt{1+\Psi'(t)^2}- h(x_0) \sqrt{1+\Psi'(x_0)^2} \right| \widetilde{\mathcal{I}}_1(x,y,t) \, \mathrm{d}t \\
&\qquad \qquad +  \frac{2\sup_{[-1,1]} \left| h \sqrt{1+{\Psi'}^2} \right|}{\pi} \int_{\mathbb{R}\setminus B_\eta(x_0)} \widetilde{\mathcal{I}}_1(x,y,t) \, \mathrm{d}t \\
&\qquad < \frac{\varepsilon}{2} +\frac{\varepsilon}{2} = \varepsilon
\end{align*}
for $|(x,y) - (x_0,\Psi(x_0))|< \delta$. This proves
\begin{equation}\label{LIMIT2}
\frac{1}{\pi}\int_{-1}^1 h(t) \sqrt{1+\Psi'(t)^2}\,  \widetilde{\mathcal{I}}_1(x,y,t) \, \mathrm{d}t \to h(x_0) \sqrt{1+\Psi'(x_0)^2}
\end{equation}
as $(x,y) \to (x_0,\Psi(x_0))$. Combining the limits \eqref{LIMIT1} and \eqref{LIMIT2} gives us
\begin{equation*}
\frac{1}{\pi}\int_{-1}^1 h(t) \sqrt{1+\Psi'(t)^2} \, \mathcal{I}_1(x,y,t) \, \mathrm{d}t \to h(x_0) \sqrt{1+\Psi'(x_0)^2} 
\end{equation*}
as $(x,y) \to (x_0,\Psi(x_0))$.

We can now look at our second integral $\int_{-1}^1 h(t) \sqrt{1+\Psi'(t)^2} \, \mathcal{I}_2(x,y,t) \, \mathrm{d}t$ as $(x,y) \to (x_0,\Psi(x_0))$. The upper bound
\begin{align*}
\big| \mathcal{I}_2(x,y,t) \big| &= \left| \frac{\Psi(x) - \Psi(t) - \Psi'(x)(x-t)}{(x-t)^2+(y-\Psi(t))^2} \right|\\
&= \frac{\left|\int_0^1 \Psi''((1-\tau)t +\tau x) \tau\, \mathrm{d}\tau (x-t)^2\right|}{(x-t)^2+(y-\Psi(t))^2}\\
&\leq \int_0^1 |\Psi''((1-\tau)t +\tau x) \tau| \, \mathrm{d}\tau \\
&\leq \sup_{[-1,1]} |\Psi''|
\end{align*}
enables us to apply the Dominated Convergence Theorem resulting in
\[ \int_{-1}^1 h(t) \sqrt{1+\Psi'(t)^2} \, \mathcal{I}_2(x,y,t) \, \mathrm{d}t \to \int_{-1}^1 h(t) \sqrt{1+\Psi'(t)^2} \mathcal{I}_2(x_0,\Psi(x_0),t) \, \mathrm{d}t\]
as $(x,y) \to (x_0,\Psi(x_0))$. Altogether we have
\begin{align*}
&\frac{\partial G}{\partial y} (x,y) -\Psi'(x) \frac{\partial G}{\partial x} (x,y) \\
&\qquad = \frac{1}{\pi}\int_{-1}^1 h(t) \sqrt{1+\Psi'(t)^2} \left( \mathcal{I}_1(x,y,t) + \mathcal{I}_2(x,y,t) \right) \, \mathrm{d}t \\
&\qquad \to h(x_0) \sqrt{1+\Psi'(x_0)^2} + \frac{1}{\pi}\int_{-1}^1 h(t) \sqrt{1+\Psi'(t)^2} \mathcal{I}_2(x_0,\Psi(x_0),t) \, \mathrm{d}t
\end{align*}
as $(x,y) \to (x_0,\Psi(x_0))$ and so
\[ \frac{\partial G}{\partial \N} (x, \Psi(x)) = h(x)  + F(x), \]
where
\[ F(x) := \frac{1}{\pi\sqrt{1+\Psi'(x)^2}}\int_{-1}^1 h(t) \sqrt{1+\Psi'(t)^2} \left[\frac{\Psi(x) - \Psi(t) - \Psi'(x)(x-t)}{(x-t)^2+(\Psi(x)-\Psi(t))^2}\right] \, \mathrm{d}t \]
for $x\in(-1,1)$.

We will now attempt to show
\begin{align*}
\sqrt{1+\Psi'(x)^2} \big(\nabla G (x,y) \cdot \T(x)\big) &= \frac{\partial G}{\partial x} (x,y) + \Psi'(x) \frac{\partial G}{\partial y} (x,y)   \\
&=  \frac{1}{\pi} \int_{-1}^1  h(t) \sqrt{1+\Psi'(t)^2} \left[\frac{ (x-t) + \Psi'(x)(y-\Psi(t))}{(x-t)^2+(y-\Psi(t))^2}\right] \, \mathrm{d}t 
\end{align*} 
has a continuous extension up to $\Gamma$. We start by noticing
\begin{align*}
&\frac{(x-t) + \Psi'(x)(y-\Psi(t))}{(x-t)^2+(y-\Psi(t))^2} \\
&\qquad = \frac{(\Psi'(x)-\Psi'(t))(y-\Psi(t))}{(x-t)^2+(y-\Psi(t))^2} - \frac{\partial}{\partial t} \left( \log\sqrt{(x-t)^2 + (y-\Psi(t))^2} \right) \\
&\qquad = \frac{(\Psi'(x)-\Psi'(t))(y-\Psi(x))}{(x-t)^2+(y-\Psi(t))^2} + \frac{(\Psi'(x)-\Psi'(t))(\Psi(x)-\Psi(t))}{(x-t)^2+(y-\Psi(t))^2} \\
&\qquad \qquad - \frac{\partial}{\partial t} \left( \log\sqrt{(x-t)^2 + (y-\Psi(t))^2} \right) 
\end{align*}
and as a result define
\[ \mathcal{I}_3 (x,y,t) := \frac{(\Psi'(x)-\Psi'(t))(y-\Psi(x))}{(x-t)^2+(y-\Psi(t))^2}, \]
\[ \mathcal{I}_4 (x,y,t) := \frac{(\Psi'(x)-\Psi'(t))(\Psi(x)-\Psi(t))}{(x-t)^2+(y-\Psi(t))^2}, \]
and
\[ \mathcal{I}_5 (x,y,t) := \frac{\partial}{\partial t} \left( \log\sqrt{(x-t)^2 + (y-\Psi(t))^2} \right). \]
Since $\mathcal{I}_3 (x,y,t) = (\Psi'(x)-\Psi'(t))\mathcal{I}_1 (x,y,t)$, our previous work shows
\begin{align*}
\int_{-1}^1 h(t) \sqrt{1+\Psi'(t)^2} \, \mathcal{I}_3(x,y,t) \, \mathrm{d}t \to 0
\end{align*}
as $(x,y) \to (x_0,\Psi(x_0))$. For $\int_{-1}^1 h(t) \sqrt{1+\Psi'(t)^2} \, \mathcal{I}_4(x,y,t) \, \mathrm{d}t$ we have 
\begin{align*}
\big| \mathcal{I}_4 (x,y,t) \big| &= \frac{\left|\int_0^1 \Psi''((1-\tau)x +\tau t) \, \mathrm{d}\tau \right| \left| \int_0^1 \Psi'((1-\tau)x +\tau t) \, \mathrm{d}\tau \right| (x-t)^2}{(x-t)^2+(y-\Psi(t))^2} \\
&\leq \int_0^1 |\Psi''((1-\tau)x +\tau t)| \, \mathrm{d}\tau \int_0^1 |\Psi'((1-\tau)x +\tau t)| \, \mathrm{d}\tau \\
&\leq \sup_{[-1,1]} |\Psi''| \sup_{[-1,1]} |\Psi'|
\end{align*}
implying
\[ \int_{-1}^1 h(t) \sqrt{1+\Psi'(t)^2} \, \mathcal{I}_4(x,y,t) \, \mathrm{d}t \to \int_{-1}^1 h(t) \sqrt{1+\Psi'(t)^2} \, \mathcal{I}_4(x_0,\Psi(x_0),t) \, \mathrm{d}t \]
as $(x,y) \to (x_0,\Psi(x_0))$ by the Dominated Convergence Theorem. Lastly, using integration by parts
\begin{align*}
&\int_{-1}^1 h(t) \sqrt{1+\Psi'(t)^2} \, \mathcal{I}_5(x,y,t) \, \mathrm{d}t \\
&\qquad = h(1) \sqrt{1+\Psi'(1)^2} \log\sqrt{(x-1)^2 + (y-\Psi(1))^2} \\
&\qquad \qquad - h(-1) \sqrt{1+\Psi'(-1)^2} \log\sqrt{(x+1)^2 + (y-\Psi(-1))^2} \\
&\qquad \qquad -\int_{-1}^1 \frac{\partial}{\partial t} \left( h(t) \sqrt{1+\Psi'(t)^2} \right) \log\sqrt{(x-t)^2 + (y-\Psi(t))^2} \, \mathrm{d}t,
\end{align*}
which has a limit as $(x,y) \to (x_0,\psi(x_0))$ since the above integral has the same form as the integral given by $G$. Overall, we have shown that
\[ \frac{\partial G}{\partial x} (x,y) + \Psi'(x) \frac{\partial G}{\partial y} (x,y) \]
has a continuous extension to $\Gamma$. Since both
\[ \frac{\partial G}{\partial y} (x,y) -\Psi'(x) \frac{\partial G}{\partial x} (x,y) \qquad \text{and} \qquad \frac{\partial G}{\partial x} (x,y) + \Psi'(x) \frac{\partial G}{\partial y} (x,y) \]
have continuous extensions to $\Gamma$, it implies that $\frac{\partial G}{\partial x}$ and $\frac{\partial G}{\partial y}$ also have continuous extensions to $\Gamma$.

Altogether, we have shown that $G\in C^2(U_1\setminus\overline{\Omega}) \cap C^1(U_1\setminus\Omega)$ solves
\begin{empheq}[left =\empheqlbrace]{alignat*=2}
\Delta G(x,y) &=0 & \qquad &\text{for} \ (x,y)\in U_1\setminus \overline{\Omega}\\
G(x,\Psi(x))&= g(x)-H(x) &&\text{for} \ x\in(-1,1) \\
\frac{\partial G}{\partial \N} (x,\Psi(x)) &= h(x) + F(x) &&\text{for} \ x\in(-1,1). 
\end{empheq}
Now to construct a solution to the Cauchy problem \eqref{CP2}, it is enough to find an open connected neighbourhood $U_2$ of $\Gamma$ and $W\in C^2(U_2\setminus\overline{\Omega}) \cap C^1(U_2\setminus\Omega)$ solving
\begin{empheq}[left =\empheqlbrace]{alignat*=2}
\Delta W(x,y) &=0 & \qquad &\text{for} \ (x,y)\in U_2 \setminus\overline{\Omega}\\
W(x,\Psi(x))&= H(x) &&\text{for} \ x\in(-1,1) \\
\frac{\partial W}{\partial\N} (x,\Psi(x)) &= -F(x) &&\text{for} \ x\in(-1,1) .
\end{empheq}
Note that $F$ is real analytic by the same reasoning we used to justify integral \eqref{ANAINT2} is real analytic. Since $F$ and $H$ are real analytic, the existence of such a $W$ is guaranteed by the Cauchy--Kovalevskaya Theorem. Finally, we have that $u := G+W$ solves the Cauchy problem \eqref{CP2} for $U = U_1\cap U_2$.
\end{proof}

Using this result, we return to the Cauchy problem \eqref{ECP}. As in the introduction, we take $\Omega\subset\mathbb{R}^2$ to be an open bounded and simply connected set with real analytic boundary. We finish this section with a proof of the following theorem relating the existence of solutions to the Cauchy problem \eqref{ECP} to the boundary data regularity.

\begin{thm}\label{CPHV}
Let $f,h \in C^1(\partial\Omega)$. There exists an open connected set $U\subset\mathbb{R}^2$ satisfying $\overline{\Omega}\subset U$ and a vector field $\B \in C^1(U\setminus \overline{\Omega};\mathbb{R}^2) \cap C(U\setminus \Omega;\mathbb{R}^2)$ that solves the Cauchy problem \eqref{ECP} if and only if $f-\mathcal{H} h$ is real analytic on $\partial\Omega$.
\end{thm}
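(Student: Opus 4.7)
The plan is to reduce the vector Cauchy problem \eqref{ECP} to the scalar Cauchy problem \eqref{CP2} on local charts, invoke Theorem \ref{HAD2}, and patch the local solutions using Holmgren's Uniqueness Theorem. Since $\partial\Omega$ is a compact real analytic curve, I cover it with finitely many open arcs $\Gamma_1,\dots,\Gamma_N$, each of which, after a rigid motion, is the graph of a real analytic function $\Psi_j\colon(-1,1)\to\mathbb{R}$. On a simply connected tubular neighborhood of each $\Gamma_j$ lying in $\mathbb{R}^2\setminus\Omega$, any $\B$ satisfying \eqref{DIV}--\eqref{CURL} can be written as $\nabla u_j$ for a harmonic potential $u_j$. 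The tangential condition integrates along the arc to $u_j(x,\Psi_j(x))=g_j(x)$ with $g_j'(x)=f(x,\Psi_j(x))\sqrt{1+\Psi_j'(x)^2}$, while the normal condition becomes $\partial u_j/\partial\N = h(x,\Psi_j(x))$, matching \eqref{CP2} exactly.

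Theorem \ref{HAD2} then says a local solution exists if and only if the associated function
\[ H_j(x) := g_j(x) - \frac{1}{\pi}\int_{-1}^1 h(t,\Psi_j(t))\sqrt{1+\Psi_j'(t)^2}\log\sqrt{(x-t)^2+(\Psi_j(x)-\Psi_j(t))^2}\,\mathrm{d}t \]
is real analytic on $(-1,1)$. The crucial bridge between this local condition and the global condition on $f-\mathcal{H}h$ is the identity
\[ H_j'(x) = \sqrt{1+\Psi_j'(x)^2}\,\bigl(f-\mathcal{H}_{\Gamma_j}h\bigr)\!\bigl(x,\Psi_j(x)\bigr), \]
valid as a Cauchy principal value, where $\mathcal{H}_{\Gamma_j}h$ denotes the contribution to $\mathcal{H}h$ coming from integrating only over $\Gamma_j$. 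This is obtained by differentiating the log kernel and matching the numerator $(x-t)+\Psi_j'(x)(\Psi_j(x)-\Psi_j(t))$ with $\bm{t}(\bm{v})\cdot(\bm{v}-\bm{w})$ after substituting the parameterization. The remainder $\mathcal{H}h-\mathcal{H}_{\Gamma_j}h$ is an integral of $h$ against a kernel that is smooth on $\Gamma_j\times(\partial\Omega\setminus\Gamma_j)$, admits a complex extension in the first variable, and is therefore real analytic on $\Gamma_j$ by Lemma \ref{ANAINT}. Since $\sqrt{1+\Psi_j'^2}$ is real analytic and positive, real analyticity of $H_j$ on $(-1,1)$ is equivalent to real analyticity of $f-\mathcal{H}h$ on $\Gamma_j$.

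Both directions of the theorem now follow from this equivalence. For necessity, given a global solution $\B$, its restriction to a tubular neighborhood of each $\Gamma_j$ yields a harmonic potential $u_j$ satisfying \eqref{CP2}, so the necessity half of Theorem \ref{HAD2} forces $H_j$ to be real analytic, hence $f-\mathcal{H}h\in C^\omega(\partial\Omega)$ by the cover. Conversely, if $f-\mathcal{H}h\in C^\omega(\partial\Omega)$, then each $H_j$ is real analytic, so Theorem \ref{HAD2} produces a local potential $u_j$ on an open neighborhood $U_j$ of $\Gamma_j$, and $\B_j := \nabla u_j$ solves \eqref{ECP} locally. On connected components of $U_j\cap U_k$ that meet $\partial\Omega$, the fields $\B_j$ and $\B_k$ satisfy \eqref{ECP} with identical Cauchy data on the shared sub-arc, so Holmgren's Uniqueness Theorem (as noted in the remarks after \eqref{ECP}) forces $\B_j=\B_k$ there. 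Setting $U := \Omega\cup\bigcup_j U_j$ and defining $\B$ by these agreeing pieces produces the required global connected neighborhood and solution.

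The main obstacle is the derivative identity linking $H_j'$ to $f-\mathcal{H}_{\Gamma_j}h$: differentiating the log kernel under the integral sign is only legitimate in a principal value sense, and one must verify that this principal value coincides with the one used to define $\mathcal{H}$. A clean way to do this is to regularize the log integral by excising a small arc $B_\varepsilon(\bm{v})\cap\Gamma_j$ about the point $(x,\Psi_j(x))$, differentiate the resulting smooth integral, and then let $\varepsilon\to 0$ using the explicit cancellation that makes the boundary contributions from the excised endpoints vanish in the limit. Once this identity and the analyticity of the remainder $\mathcal{H}h-\mathcal{H}_{\Gamma_j}h$ via Lemma \ref{ANAINT} are established, the reduction to Theorem \ref{HAD2} and the Holmgren patching are essentially formal.
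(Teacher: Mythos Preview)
Your approach is essentially the paper's: localise to graph charts, invoke Theorem~\ref{HAD2}, differentiate $H_j$ to recover the singular integral $\mathcal{H}_{\Gamma_j}h$, show the far-field piece $\mathcal{H}h-\mathcal{H}_{\Gamma_j}h$ is analytic via Lemma~\ref{ANAINT}, and patch by Holmgren. Two technical points are handled more carefully in the paper than in your sketch. First, differentiating the logarithmic integral naturally produces a principal value over a \emph{parameter} interval $(\widetilde{x}-\varepsilon,\widetilde{x}+\varepsilon)$, whereas $\mathcal{H}$ is defined by excising the Euclidean ball $B_\varepsilon(\bm{v})$; the paper spends a page verifying these two principal values coincide, which your proposal does not address (excising $B_\varepsilon(\bm{v})\cap\Gamma_j$ directly and applying Leibniz is awkward because the excised endpoints move nonlinearly with $x$). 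Second, for the patching you only claim $\B_j=\B_k$ on components of $U_j\cap U_k$ that meet $\partial\Omega$; to make $\B$ globally well-defined you must ensure there are no other components, which the paper achieves by restricting each $\B_j$ to an exterior collar neighbourhood $N_j$ contained in a fixed collar $N^*$, guaranteeing each overlap $N_j\cap N_k$ is itself a connected collar of $\Gamma_j\cap\Gamma_k$. Neither point is a genuine obstruction---both are straightforward to supply---but without them the argument is incomplete.
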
 
\begin{proof}
We first prove that $f-\mathcal{H} h$ is necessarily real analytic. Suppose that $\B \in {C^1(U\setminus \overline{\Omega};\mathbb{R}^2)} \cap C(U\setminus \Omega;\mathbb{R}^2)$ solves the Cauchy problem \eqref{ECP}. We will show ${f-\mathcal{H} h}$ is real analytic at $\bm{v}_0\in\partial\Omega$. We start by noticing that there exists an open neighbourhood ${V\subset U}$ of $\bm{v}_0$ such that $V\setminus \Omega$ is simply connected. Since on simply connected domains every harmonic vector field has a harmonic scalar potential, there exists $u\in {C^2(V\setminus \overline{\Omega})} \cap C^1(V\setminus\Omega)$ satisfying $\B =\nabla u$. Furthermore, since every real analytic curve is locally the graph of a real analytic function, there exists $a>0$ and $\Psi\colon[-a,a]\to \mathbb{R}$ such that 
\begin{equation}\label{GAMMA}
\Gamma := \{ \bm{v}_0 + \widetilde{x}\T(\bm{v}_0) + \Psi(\widetilde{x})\bm{n}(\bm{v}_0)  : \widetilde{x}\in (-a,a) \}  
\end{equation} 
is a segment of $\partial\Omega$ containing $\bm{v}_0$, $\Gamma$ lies within $V$, and $\Psi$ is has a real analytic extension to an open neighbourhood of $[-a,a]$. That the vector field $\B$ is a solution to the Cauchy problem \eqref{ECP} implies that $u$ solves
\begin{subequations}\label{SYS2}
\begin{empheq}[left =\empheqlbrace]{alignat=2}
\Delta u &=0 & \quad &\text{in} \  V \setminus\overline{\Omega}\\
\nabla u\cdot \T &= f &&\text{on} \ \Gamma \\
\frac{\partial u}{\partial\N} &= h &&\text{on} \ \Gamma.
\end{empheq}
\end{subequations}

We now perform the coordinate transformation 
\begin{equation}\label{CT}
(x,y)= T(\widetilde{x},\widetilde{y}) := \bm{v}_0 + \widetilde{x}\bm{t}(\bm{v}_0) + \widetilde{y}\bm{n}(\bm{v}_0),
\end{equation} 
with $\widetilde{u}(\widetilde{x},\widetilde{y}) := u(T(\widetilde{x},\widetilde{y}))$ to system \eqref{SYS2}. Note that $T$ is an isometry and the Laplacian is invariant under isometries. $T$ transforms the equation $\nabla u\cdot \T = f$ to 
\begin{equation}\label{FTLI}
\nabla \widetilde{u}(\widetilde{x},\Psi(\widetilde{x})) \cdot (1,\Psi'(\widetilde{x})) = \widetilde{f}(\widetilde{x})\sqrt{1+\Psi'(\widetilde{x})^2}, 
\end{equation} 
where $\widetilde{f}(\widetilde{x}) := f(T(\widetilde{x},\Psi(\widetilde{x})))$. By the Fundamental Theorem of Calculus for Line Integrals, equation \eqref{FTLI} can be integrated to obtain $\widetilde{u}(\widetilde{x},\Psi(\widetilde{x})) = \widetilde{g}(\widetilde{x})$ where $\widetilde{g}'(\widetilde{x}) =\widetilde{f}(\widetilde{x})\sqrt{1+\Psi'(\widetilde{x})^2}$. Overall, by letting 
\[ \widetilde{V}:= T^{-1}V, \quad \widetilde{\Omega}:= T^{-1}\Omega, \quad \widetilde{\N}:=T^{-1}\bm{\N}, \quad \widetilde{h}(\widetilde{x}):= h(T(\widetilde{x},\Psi(\widetilde{x}))), \] 
we have a solution $\widetilde{u}$ to
\begin{subequations}\label{SYS1}
\begin{empheq}[left =\empheqlbrace]{alignat=2}
\Delta \widetilde{u} (\widetilde{x},\widetilde{y}) &=0 & \quad &\text{for} \ (\widetilde{x},\widetilde{y})\in \widetilde{V} \setminus\overline{\widetilde{\Omega}}\\
\widetilde{u}(\widetilde{x},\Psi(\widetilde{x})) &= \widetilde{g}(\widetilde{x}) &&\text{for} \ \widetilde{x}\in(-a,a) \\
\frac{\partial\widetilde{u}}{\partial\widetilde{\N}} (\widetilde{x},\Psi(\widetilde{x})) &= \widetilde{h}(\widetilde{x}) &&\text{for} \ \widetilde{x}\in(-a,a).
\end{empheq}
\end{subequations}

Theorem \ref{HAD2} tells us the existence of a solution to this system implies that
\begin{equation}\label{FUNCH}
H(\widetilde{x}) := \widetilde{g}(\widetilde{x}) - \frac{1}{\pi}\int_{-a}^a \widetilde{h}(t) \sqrt{1+\Psi'(t)^2} \log\sqrt{(\widetilde{x}-t)^2 + (\Psi(\widetilde{x})-\Psi(t))^2} \, \mathrm{d}t 
\end{equation} 
is real analytic on $(-a,a)$. It follows that the derivative of $H$ is also real analytic on $(-a,a)$. We will now show that in a Cauchy principle value sense we can interchange the integral in $H$ with a derivative. Care has to be taken around the singularity of the integrand. For small positive $\varepsilon$ we define $I_\varepsilon(\widetilde{x}) = (\widetilde{x}-\varepsilon,\widetilde{x}+\varepsilon)$ and 
\[ J_\varepsilon (\widetilde{x}) := \int_{[-a,a]\setminus I_\varepsilon(\widetilde{x})} \widetilde{h}(t) \sqrt{1+\Psi'(t)^2} \log\sqrt{(\widetilde{x}-t)^2 + (\Psi(\widetilde{x})-\Psi(t))^2} \, \mathrm{d}t, \]
which satisfies $H(\widetilde{x}) = \widetilde{g}(\widetilde{x}) - \frac{1}{\pi}\lim_{\varepsilon\to 0}J_\varepsilon(\widetilde{x})$. Using the Leibniz integral rule we have
\begin{align*}
J_\varepsilon' (\widetilde{x}) = \int_{[-a,a]\setminus I_\varepsilon(\widetilde{x})}\mathcal{I}(\widetilde{x},t) \, \mathrm{d}t + R_\varepsilon(\widetilde{x}),
\end{align*} 
where
\[ \mathcal{I}(\widetilde{x},t) := \widetilde{h}(t) \sqrt{1+\Psi'(t)^2} \left[ \frac{\widetilde{x}-t+\Psi'(\widetilde{x})(\Psi(\widetilde{x})-\Psi(t))}{(\widetilde{x}-t)^2 + (\Psi(\widetilde{x})-\Psi(t))^2} \right] \]
and
\begin{align*}
R_\varepsilon (\widetilde{x}) &:= \widetilde{h}(\widetilde{x}-\varepsilon) \sqrt{1+\Psi'(\widetilde{x}-\varepsilon)^2} \log(\sqrt{\varepsilon^2+(\Psi(\widetilde{x})-\Psi(\widetilde{x}-\varepsilon))^2}) \\
&\qquad -\widetilde{h}(\widetilde{x}+\varepsilon) \sqrt{1+\Psi'(\widetilde{x}+\varepsilon)^2} \log(\sqrt{\varepsilon^2+(\Psi(\widetilde{x})-\Psi(\widetilde{x}+\varepsilon))^2}). 
\end{align*} 

Our next step is to prove $J_\varepsilon'$ converges uniformly as $\varepsilon$ goes to zero. Firstly, $|R_\varepsilon(\widetilde{x})| \lesssim \varepsilon|\log\varepsilon| + \varepsilon$ and so $R_\varepsilon \to 0$ uniformly as $\varepsilon \to 0$. Secondly, observe that
\[ \mathcal{I}(\widetilde{x},t) = \frac{\psi(\widetilde{x},t)}{\widetilde{x}-t}, \]
where
\[ \psi(\widetilde{x},t) := \widetilde{h}(t)\sqrt{1+\Psi'(t)^2} \left[ \frac{1+\Psi'(\widetilde{x})\phi(\widetilde{x},t)}{1 + \phi(\widetilde{x},t)^2} \right] \]
and $\phi\in C^2([-a,a]^2)$ satisfies 
\[ \Psi(\widetilde{x})- \Psi(t) = \phi(\widetilde{x},t)(\widetilde{x}-t). \]
Note that $\psi\in C^1([-a,a]^2)$ and so by letting $0<\delta<\varepsilon$ we have
\begin{align*}
&\left| \int_{[-a,a]\setminus I_\varepsilon(\widetilde{x})}\mathcal{I}(\widetilde{x},t) \, \mathrm{d}t -\int_{[-a,a]\setminus I_\delta(\widetilde{x})}\mathcal{I}(\widetilde{x},t) \, \mathrm{d}t \right| \\ 
&\qquad = \left| \int_{ I_\varepsilon(\widetilde{x}) \setminus I_\delta(\widetilde{x})} \frac{\psi(\widetilde{x},t)}{\widetilde{x}-t} \, \mathrm{d}t \right| \\
&\qquad = \left| \int_{ I_\varepsilon(\widetilde{x}) \setminus I_\delta(\widetilde{x})} \frac{\psi(\widetilde{x},t)}{\widetilde{x}-t} \, \mathrm{d}t - \psi(\widetilde{x},\widetilde{x}) \int_{ I_\varepsilon(\widetilde{x}) \setminus I_\delta(\widetilde{x})} \frac{1}{\widetilde{x}-t} \, \mathrm{d}t \right| \\
&\qquad \leq  \int_{ I_\varepsilon(\widetilde{x}) \setminus I_\delta(\widetilde{x})} \left|\frac{\psi(\widetilde{x},\widetilde{x})-\psi(\widetilde{x},t)}{\widetilde{x}-t}\right| \, \mathrm{d}t  \\
&\qquad \leq \int_{ I_\varepsilon(\widetilde{x}) \setminus I_\delta(\widetilde{x})} \, \sup_{[-a,a]^2} |\partial_2 \psi |  \, \mathrm{d}t \\
&\qquad \leq \varepsilon \sup_{[-a,a]^2} |\partial_2 \psi |.
\end{align*}
This proves that $\int_{[-a,a]\setminus I_\varepsilon(\widetilde{x})}\mathcal{I}(\widetilde{x},t) \, \mathrm{d}t$ is uniformly Cauchy and so converges uniformly as $\varepsilon\to 0$. Therefore, we have shown $J_\varepsilon'$ converges uniformly to ${\lim_{\varepsilon\to 0}\int_{[-a,a]\setminus I_\varepsilon(\widetilde{x})}\mathcal{I}(\widetilde{x},t) \, \mathrm{d}t}$ as $\varepsilon\to 0$ which in turn proves
\[ H'(\widetilde{x}) = \widetilde{g}'(\widetilde{x}) - \frac{1}{\pi}\lim_{\varepsilon\to 0}\int_{[-a,a]\setminus I_\varepsilon(\widetilde{x})}\mathcal{I}(\widetilde{x},t) \, \mathrm{d}t. \]

We will show that we can replace $I_\varepsilon(\widetilde{x})$ with the set 
\[ S_\varepsilon(\widetilde{x}) := \{ t\in[-a,a] : |(\widetilde{x},\Psi(\widetilde{x})) - (t,\Psi(t))|< \varepsilon \} \]
to make the integral independent on the parameterisation of $\Gamma$. This independence will come from the fact that 
\[ \Gamma\cap B_\varepsilon (T(\widetilde{x},\Psi(\widetilde{x}))) = \{T(t,\Psi(t))\in\mathbb{R}^2 : t\in S_\varepsilon(\widetilde{x}) \}. \]
Note that if $t\in S_\varepsilon(\widetilde{x})$, then
\[ (\widetilde{x}-t)^2 + (\Psi(\widetilde{x})-\Psi(t))^2 < \varepsilon^2, \]
which implies
\[ |\widetilde{x}-t| < \frac{\varepsilon}{\sqrt{1+\phi(\widetilde{x},t)^2}}. \]
For ease of notation it will be useful to define $\varphi\in C^2([-a,a]^2)$ by
\[ \varphi(\widetilde{x},t) := \frac{1}{\sqrt{1+\phi(\widetilde{x},t)^2}}. \]
We now wish to show,
\begin{align*}
\lim_{\varepsilon\to 0} \int_{[-a,a]\setminus I_\varepsilon(\widetilde{x})}\mathcal{I}(\widetilde{x},t) \, \mathrm{d}t = \lim_{\varepsilon\to 0} \int_{[-a,a]\setminus S_\varepsilon(\widetilde{x})}\mathcal{I}(\widetilde{x},t) \, \mathrm{d}t. 
\end{align*}
Fix $\zeta>0$. Let $0<\eta<\varphi(\widetilde{x},\widetilde{x})$ be small enough such that
\[ \log\left(\frac{\varphi(\widetilde{x},\widetilde{x})+\eta}{\varphi(\widetilde{x},\widetilde{x})-\eta} \right) < \frac{\zeta}{2\sup_{[-a,a]^2}|\psi|}. \]
Since $\varphi$ is continuous there exists $\varepsilon>0$ small enough such that if $t\in [-a,a]$ and $|\widetilde{x}-t|<\varepsilon$, then $|\varphi(\widetilde{x},t) - \varphi(\widetilde{x},\widetilde{x})|<\eta$. Now that we have chosen an $\varepsilon$, let $t\in S_\varepsilon(\widetilde{x})$. It follows that $|\widetilde{x}-t|<\varepsilon$ and $|\widetilde{x}-t|<\varepsilon \varphi(\widetilde{x},t)$. From this it is evident that 
\[ I_{\varepsilon(\varphi(\widetilde{x},\widetilde{x})-\eta)}(\widetilde{x}) \subset S_\varepsilon(\widetilde{x}) \subset I_{\varepsilon(\varphi(\widetilde{x},\widetilde{x})+\eta)}(\widetilde{x}), \]
and obviously 
\[ I_{\varepsilon(\varphi(\widetilde{x},\widetilde{x})-\eta)}(\widetilde{x}) \subset I_{\varepsilon\varphi(\widetilde{x},\widetilde{x})}(\widetilde{x}) \subset I_{\varepsilon(\varphi(\widetilde{x},\widetilde{x})+\eta)}(\widetilde{x}). \]
These inclusions guarantee  
\begin{align*}
&\left| \int_{[-a,a]\setminus I_{\varepsilon\varphi(\widetilde{x},\widetilde{x})}(\widetilde{x})}\mathcal{I}(\widetilde{x},t) \, \mathrm{d}t - \int_{[-a,a]\setminus S_\varepsilon(\widetilde{x})}\mathcal{I}(\widetilde{x},t) \, \mathrm{d}t \right| \\
&\qquad = \left| \int_{S_\varepsilon(\widetilde{x}) \setminus I_{\varepsilon\varphi(\widetilde{x},\widetilde{x})}(\widetilde{x})}\mathcal{I}(\widetilde{x},t) \, \mathrm{d}t - \int_{ I_{\varepsilon\varphi(\widetilde{x},\widetilde{x})}(\widetilde{x}) \setminus S_\varepsilon(\widetilde{x})}\mathcal{I}(\widetilde{x},t) \, \mathrm{d}t \right| \\
&\qquad \leq \int_{S_\varepsilon(\widetilde{x}) \setminus I_{\varepsilon\varphi(\widetilde{x},\widetilde{x})}(\widetilde{x})} |\mathcal{I}(\widetilde{x},t)| \, \mathrm{d}t + \int_{ I_{\varepsilon\varphi(\widetilde{x},\widetilde{x})}(\widetilde{x}) \setminus S_\varepsilon(\widetilde{x})}|\mathcal{I}(\widetilde{x},t)| \, \mathrm{d}t \\
&\qquad \leq \int_{I_{\varepsilon(\varphi(\widetilde{x},\widetilde{x})+\eta)}(\widetilde{x}) \setminus I_{\varepsilon(\varphi(\widetilde{x},\widetilde{x})-\eta)}(\widetilde{x})} |\mathcal{I}(\widetilde{x},t)| \, \mathrm{d}t \\
&\qquad \leq \sup_{[-a,a]^2}|\psi|\int_{I_{\varepsilon(\varphi(\widetilde{x},\widetilde{x})+\eta)}(\widetilde{x}) \setminus I_{\varepsilon(\varphi(\widetilde{x},\widetilde{x})-\eta)}(\widetilde{x})} \left|\frac{1}{\widetilde{x}-t}\right| \, \mathrm{d}t \\
&\qquad = 2\sup_{[-a,a]^2}|\psi| \int_{\varepsilon(\varphi(\widetilde{x},\widetilde{x})-\eta)}^{\varepsilon(\varphi(\widetilde{x},\widetilde{x})+\eta)} \frac{1}{t} \, \mathrm{d}t \\
&\qquad = 2\sup_{[-a,a]^2}|\psi| \log\left(\frac{\varphi(\widetilde{x},\widetilde{x})+\eta}{\varphi(\widetilde{x},\widetilde{x})-\eta} \right) \\
&\qquad <\zeta
\end{align*}
as required. We can conclude that
\[ H'(\widetilde{x}) = \widetilde{g}'(\widetilde{x}) - \frac{1}{\pi}\lim_{\varepsilon\to 0}\int_{[-a,a]\setminus S_\varepsilon(\widetilde{x})}\mathcal{I}(\widetilde{x},t) \, \mathrm{d}t. \]

Observe that we can rewrite $\mathcal{I}(\widetilde{x},t)/\sqrt{1+\Psi'(\widetilde{x})^2}$ as
\begin{align*}
\frac{\mathcal{I}(\widetilde{x},t)}{\sqrt{1+\Psi'(\widetilde{x})^2}} &= \widetilde{h}(t) |(t,\Psi'(t))| \left[ \frac{(1,\Psi'(\widetilde{x}))\cdot ((\widetilde{x},\Psi(\widetilde{x}))-(t,\Psi(t)))}{|(1,\Psi'(\widetilde{x}))||(\widetilde{x},\Psi(\widetilde{x}))-(t,\Psi(t))|^2} \right] \\
&= h(T(t,\Psi(t)))|T(t,\Psi'(t))| \left[ \frac{T(1,\Psi'(\widetilde{x}))\cdot (T(\widetilde{x},\Psi(\widetilde{x}))-T(t,\Psi(t)))}{|T(1,\Psi'(\widetilde{x}))||T(\widetilde{x},\Psi(\widetilde{x}))-T(t,\Psi(t))|^2} \right].
\end{align*} 
Recall that $T(\widetilde{x},\Psi(\widetilde{x}))$ is a parameterisation of $\Gamma$. Hence we define the parameterisation $\bm{\gamma}(s) := T(s,\Psi(s))$ for $s\in[-a,a]$. Since $T$ is an isometry, it holds that $\bm{\gamma}'(s) = T(1,\Psi'(s))$. Therefore,
\begin{align*} \frac{\mathcal{I}(s,t)}{\sqrt{1+\Psi'(s)^2}} &= h(\bm{\gamma}(t))|\bm{\gamma}'(t)| \left[ \frac{\bm{\gamma}'(s)\cdot (\bm{\gamma}(s)-\bm{\gamma}(t))}{|\bm{\gamma}'(s)||\bm{\gamma}(s)-\bm{\gamma}(t)|^2} \right] \\
&= h(\bm{\gamma}(t))|\bm{\gamma}'(t)| \left[ \frac{\bm{t}(\bm{\gamma}(s))\cdot (\bm{\gamma}(s)-\bm{\gamma}(t))}{|\bm{\gamma}(s)-\bm{\gamma}(t)|^2} \right]. 
\end{align*}
Furthermore, by recalling the formula for $\widetilde{g}'$, it follows that 
\[  \widetilde{g}'(s) = \widetilde{f}(s)\sqrt{1+\Psi'(s)^2}  = f(\bm{\gamma}(s))\sqrt{1+\Psi'(s)^2} \]
By substituting these expressions into our equation for $H'$ gives us
\begin{align*}
\frac{H'(s)}{\sqrt{1+\Psi'(s)^2}} &= f(\bm{\gamma}(s)) - \frac{1}{\pi}\lim_{\varepsilon\to 0}\int_{[-a,a]\setminus S_\varepsilon(s)} h(\bm{\gamma}(t))|\bm{\gamma}'(t)| \left[ \frac{\bm{t}(\bm{\gamma}(s))\cdot (\bm{\gamma}(s)-\bm{\gamma}(t))}{|\bm{\gamma}(s)-\bm{\gamma}(t)|^2} \right] \, \mathrm{d}t \\
&= f(\bm{\gamma}(s)) - \frac{1}{\pi}\lim_{\varepsilon\to 0}\int_{\Gamma\setminus B_\varepsilon(\bm{\gamma}(s))} h(\bm{w}) \left[ \frac{\bm{t}(\bm{\gamma}(s))\cdot (\bm{\gamma}(s)-\bm{w})}{|\bm{\gamma}(s)-\bm{w}|^2} \right] \, \mathrm{d}\bm{w}.
\end{align*}
Since the left-hand side is real analytic on $(-a,a)$, and $\bm{\gamma}(0)=\bm{v}_0$, it follows by definition that  
\begin{equation}\label{PART1}
 f(\bm{v}) - \frac{1}{\pi}\lim_{\varepsilon\to 0}\int_{\Gamma\setminus B_\varepsilon(\bm{v})} h(\bm{w}) \frac{\bm{t}(\bm{v})\cdot (\bm{v}-\bm{w})}{|\bm{v}-\bm{w}|^2}  \, \mathrm{d}\bm{w}
\end{equation}
is real analytic at $\bm{v} = \bm{v}_0$.

It remains to show 
\begin{equation}\label{PART2}
 \int_{\partial\Omega \setminus \Gamma} h(\bm{w}) \frac{\bm{t}(\bm{v})\cdot (\bm{v}-\bm{w})}{|\bm{v}-\bm{w}|^2}  \, \mathrm{d}\bm{w} 
\end{equation}
is real analytic at $\bm{v}=\bm{v}_0$. Let $\bm{\sigma}\colon I \to\mathbb{R}^2$ be a real analytic parameterisation of $\partial\Omega\setminus\Gamma$. Since there is a positive distance between $\partial\Omega\setminus\Gamma$ and $\bm{v_0}$, there exists $r>0$ small enough such that $\bm{\gamma}$ has a complex analytic extension to the complex disc $D_r(0)$ and $|\bm{\gamma}(z) - \bm{\sigma}(t)|>0$ for all $z\in D_r(0)$ and $t\in I$. Consequently, 
\[ h(\bm{\sigma}(t))|\bm{\sigma}'(t)| \left[ \frac{\bm{t}(\bm{\gamma}(z))\cdot (\bm{\gamma}(z)-\bm{\sigma}(t))}{|\bm{\gamma}(z)-\bm{\sigma}(t)|^2} \right] \]
is complex analytic on $D_r(0)$  for all $t\in I$. We can now apply Lemma \ref{ANAINT} to justify 
\[ \int_{I} h(\bm{\sigma}(t))|\bm{\sigma}'(t)| \left[ \frac{\bm{t}(\bm{\gamma}(z))\cdot (\bm{\gamma}(z)-\bm{\sigma}(t))}{|\bm{\gamma}(z)-\bm{\sigma}(t)|^2} \right] \, \mathrm{d}t \]
being complex analytic on $D_r(0)$. Thus the integral \eqref{PART2} is real analytic at $\bm{v}=\bm{v}_0$.

By combining the real analyticity of expressions \eqref{PART1} and \eqref{PART2}, we obtain the real analyticity of $f-\mathcal{H}h$ at $\bm{v}_0$. As $\bm{v}_0$ was chosen arbitrarily, $f-\mathcal{H}h$ is real analytic on $\partial\Omega$. This concludes the necessity section of the proof. \bigbreak

For the sufficiency section of the proof, we start by assuming $f-\mathcal{H}h$ is real analytic on $\partial\Omega$. We begin by reversing the arguments used in the necessity part. Afterwards, we will have to make sure solutions over different regions coincide on their overlap. 

Given $\bm{v}_0\in\partial\Omega$, let the boundary segment $\Gamma$ in \eqref{GAMMA} and coordinate transform $T$ in \eqref{CT} be defined as before. By reversing previous arguments, the function $f-\mathcal{H}h$ being real analytic on $\Gamma$ implies $H$, defined in \eqref{FUNCH}, is real analytic on $(-a,a)$. Therefore, by Theorem \ref{HAD2} there exists $\widetilde{V}\subset\mathbb{R}^2$, an open neighbourhood of $T\Gamma$, and $\widetilde{u}\in C^2(\widetilde{V}\setminus \overline{\Omega}) \cap C^1(\widetilde{V}\setminus \Omega)$ that solves the scalar system \eqref{SYS1}. If we now perform the coordinate transform $T^{-1}$ on system \eqref{SYS1}, then $u(x,y):= \widetilde{u}(T^{-1}(x,y))$ solves system \eqref{SYS2} with $V:=T\widetilde{V}$. Therefore, the vector field $\B := \nabla u$ satisfies $\B \in C^1(V\setminus \overline{\Omega};\mathbb{R}^2) \cap C(V\setminus \Omega;\mathbb{R}^2)$ and solves
\begin{subequations}\label{SYS3}
\begin{empheq}[left=\empheqlbrace]{alignat=2}
\operatorname{div} \B &= 0  \quad && \text{in} \ V\setminus \overline{\Omega} \\
\operatorname{curl} \B &= 0 && \text{in} \ V\setminus \overline{\Omega} \\
\B\cdot\T &= f  && \text{on} \ \Gamma \\
\B\cdot\N &= h && \text{on} \ \Gamma. 
\end{empheq}
\end{subequations}

For all $\bm{v}\in\partial\Omega$ we can find a boundary segment $\Gamma_{\bm{v}}\subset\partial\Omega$ that is the graph of a real analytic function and contains $\bm{v}$. We can apply the above method to obtain, for every $\bm{v}\in\partial\Omega$, an open neighbourhood $V_{\bm{v}}\subset \mathbb{R}^2$ of $\Gamma_{\bm{v}}$, and vector field $\B_{\bm{v}} \in C^1(V_{\bm{v}}\setminus \overline{\Omega};\mathbb{R}^2) \cap C(V_{\bm{v}}\setminus \Omega;\mathbb{R}^2)$ solving system \eqref{SYS3} with $V=V_{\bm{v}}$ and $\Gamma=\Gamma_{\bm{v}}$. To show that the $\{\B_{\bm{v}}\}_{\bm{v}\in\partial\Omega}$ can be combined to form a solution to the Cauchy problem \eqref{ECP}, we need to make sure the $\B_{\bm{v}}$ coincide on the regions where they overlap. We will do this by restricting our vector fields to regions which we call exterior collar neighbourhoods.  

Given boundary segment $\Gamma\subseteq\partial\Omega$ and continuous function $l \colon\Gamma\to(0,\infty]$, we define the fibre $F(\bm{w}) := \{\bm{w}+ \varepsilon\N(\bm{w}) : \varepsilon\in[0,l(\bm{w}))\}$ for $\bm{w}\in\Gamma$. If the collection of fibres $\{F(\bm{w})\}_{\bm{w}\in\Gamma}$ is pairwise disjoint, we call $N:= \bigcup_{\bm{w}\in\Gamma} F(\bm{w})$ an exterior collar neighbourhood of $\Gamma$. We also say $N$ has width $l(\bm{w})$ at $\bm{w}\in\Gamma$. An example of an exterior collar neighbourhood is given by the shaded region in Figure \ref{ETN}. Note, the existence of an exterior collar neighbourhood of a curve is guaranteed if the curve is $C^2$. Furthermore, as $\partial\Omega$ is compact and sufficiently regular there exists a constant $l^*\in(0,\infty]$ such that ${N^* :=  \bigcup_{\bm{w}\in\partial\Omega} \{\bm{w}+ \varepsilon\N(\bm{w}) : \varepsilon\in[0,l^*)\}}$ is an exterior collar neighbourhood of $\partial\Omega$ with constant width $l^*$.

\begin{figure}[H]
\begin{tikzpicture}[scale=3, rotate around={-40:(0,0)}]

\draw [thick, dashed, fill=gray!20] plot [smooth, samples=100, domain=-1:1] ({\x-0.3*3*(\x)^2*(1-(\x)^4)}, {(\x)^3+0.3*(1-(\x)^4)}) -- plot [smooth, samples=100, domain=1:-1] (\x, {(\x)^3});
\draw [thick] plot [smooth, samples=100, domain=-1.1:1.1] (\x, {(\x)^3}) node [right] {$\partial\Omega$};

\foreach \x in {-1,-0.95,-0.9,-0.85,-0.75,-0.7,...,1}
\draw [thick] (\x, {(\x)^3}) -- ({\x-0.3*3*(\x)^2*(1-(\x)^4)}, {(\x)^3+0.3*(1-(\x)^4)});

\node [left, red] at ({-0.8-0.3*3*(-0.8)^2*(1-(-0.8)^4)}, {(-0.8)^3+0.3*(1-(-0.8)^4)}) {$F(\bm{w})$};
\draw [thick, red] (-0.8,-0.8^3) --  ({-0.8-0.3*3*(-0.8)^2*(1-(-0.8)^4)}, {(-0.8)^3+0.3*(1-(-0.8)^4)});
\draw [fill] (-0.8,-0.8^3) circle [radius=0.6pt] node [below] {$\bm{w}$};
\node [below left] at (0,0) {$\Gamma$};
\node [fill=gray!20] at (-0.2,0.14) {$N$};

\end{tikzpicture}
\centering
\caption{Exterior collar neighbourhood $N$.}
\label{ETN}
\end{figure}
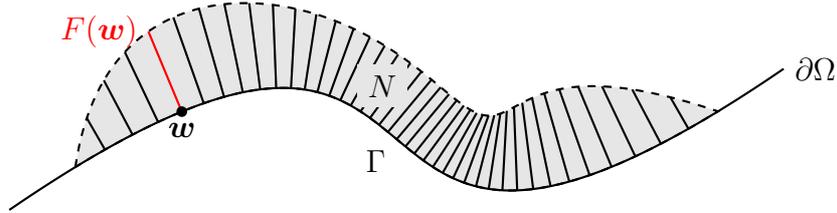

For each $\bm{v}\in\partial\Omega$ it is easy to construct an exterior collar neighbourhood $N_{\bm{v}}$ of $\Gamma_{\bm{v}}$ that is contained within $V_{\bm{v}}\cap N^*$. We restrict the local solutions $\B_{\bm{v}}$ to $N_{\bm{v}}$ in order to avoid overlaps where the $\B_{\bm{v}}$ do not coincide. Choosing the $N_{\bm{v}}$ to be within $N^*$ guarantees that for distinct $\bm{v},\bm{w}\in\partial\Omega$, the intersection $N_{\bm{v}}\cap N_{\bm{w}}$ is connected and in particular an exterior collar neighbourhood of $\Gamma_{\bm{v}}\cap\Gamma_{\bm{w}}$. This is trivially satisfied if $N_{\bm{v}}\cap N_{\bm{w}}$ and $\Gamma_{\bm{v}}\cap\Gamma_{\bm{w}}$ are empty. 

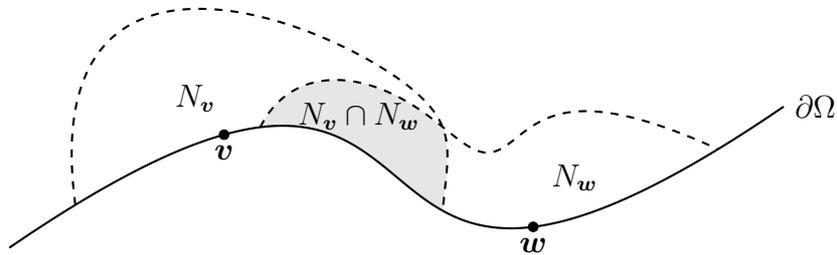
\begin{figure}[H]
\begin{tikzpicture}[scale=3, rotate around={-40:(0,0)}]

\begin{scope}
\clip  plot [smooth, samples=100, domain=-1:0.25] ({\x-0.35*3*(\x)^2*(1-(1.6*\x+0.6)^4)}, {(\x)^3+0.35*(1-(1.6*\x+0.6)^4)}) -- plot [smooth, samples=100, domain=0.25:-1] (\x, {(\x)^3});
\clip plot [smooth, samples=100, domain=-0.6:1] ({\x-0.3*3*(\x)^2*(1-(1.25*\x-0.25)^4)}, {(\x)^3+0.3*(1-(1.25*\x-0.25)^4)}) -- plot [smooth, samples=100, domain=1:-0.6] (\x, {(\x)^3});
\fill [gray!20] (0,0) circle [radius=1];
\end{scope}

\draw [thick] plot [smooth, samples=100, domain=-1.1:1.1] (\x, {(\x)^3}) node [right] {$\partial\Omega$};
\draw [thick, dashed] plot [smooth, samples=100, domain=-1:0.25] ({\x-0.35*3*(\x)^2*(1-(1.6*\x+0.6)^4)}, {(\x)^3+0.35*(1-(1.6*\x+0.6)^4)});
\draw [thick, dashed] plot [smooth, samples=100, domain=-0.6:1] ({\x-0.3*3*(\x)^2*(1-(1.25*\x-0.25)^4)}, {(\x)^3+0.3*(1-(1.25*\x-0.25)^4)});

\node at (-0.3,0.1) {$N_{\bm{v}}\cap N_{\bm{w}}$};
\node at (-0.9,-0.3) {$N_{\bm{v}}$};
\node at (0.6,0.5) {$N_{\bm{w}}$};
\draw [fill] (-0.7,-0.7^3) circle [radius=0.6pt] node [below] {$\bm{v}$};
\draw [fill] (0.6,0.6^3) circle [radius=0.6pt] node [below] {$\bm{w}$};

\end{tikzpicture}
\centering
\caption{Intersection of exterior collar neighbourhoods $N_{\bm{v}}$ and $N_{\bm{w}}$.}
\label{ETN2}
\end{figure}

Now $\B_{\bm{v}}$ and $\B_{\bm{w}}$ solve system \eqref{SYS3} with $V\setminus\Omega=N_{\bm{v}}\cap N_{\bm{w}}$ and $\Gamma=\Gamma_{\bm{v}}\cap\Gamma_{\bm{w}}$. Holmgren's Uniqueness Theorem \cite[\S2.3]{ReRo} tells us that for open connected neighbourhoods $V$ of $\Gamma$, solutions to system \eqref{SYS3} are unique. Therefore, $\B_{\bm{v}}$ and $\B_{\bm{w}}$ must coincide on $N_{\bm{v}}\cap N_{\bm{w}}$. Hence, the vector field $\B^*$, defined pointwise by $\B^*(\bm{w}) := \B_{\bm{v}}(\bm{w})$ for $\bm{w}\in N_{\bm{v}}$, is well defined on the exterior collar neighbourhood $U^*:=\bigcup_{\bm{v}\in\partial\Omega} N_{\bm{v}}$ of $\partial\Omega$. The vector field $\B^*$ also solves system \eqref{SYS3} with $V=U^*\cup \Omega$ and $\Gamma=\partial\Omega$. As a result, $U=U^*\cup\Omega$ and $\B^*$ solve the Cauchy problem \eqref{ECP}.
\end{proof}

\section{Cauchy--Kovalevskaya Theorem and Distance from Boundary}\label{SEC3}

We recall from the introduction that the equations $\operatorname{div} \B = \operatorname{curl} \B=0$ can be viewed as the Cauchy--Riemann equations of $\mathcal{B} = B_1 - iB_2$ with respect to $z=x+iy$. Furthermore, the equations can be combined into the single complex equation
\[ \frac{\partial\mathcal{B}}{\partial y} = i \frac{\partial\mathcal{B}}{\partial x}. \]
We think of $\mathcal{B}$ as a function from $\mathbb{R}^2$ to $\mathbb{C}$. Let $\bm{\gamma}:\mathbb{T}\to\partial\Omega$ be a real analytic parameterisation for $\partial\Omega$, oriented such that $\bm{\gamma}'^\perp = (-\gamma_2',\gamma_1')$ is an outward normal to $\Omega$. The boundary conditions $\B\cdot\T = f$ and $\B\cdot\N = h$ are equivalent to $\mathcal{B}(\bm{\gamma}(t)) = \Theta(t)$ where
\begin{equation}\label{THETA}
\Theta(t) := (\gamma_1'(t)-i\gamma_2'(t))\left(\frac{f(\bm{\gamma}(t))-ih(\bm{\gamma}(t))}{|\bm{\gamma}'(t)|}\right)
\end{equation} 
and $|\bm{\gamma}'(t)| = \sqrt{\gamma_1'(t)^2+\gamma_2'(t)^2}$.

We can therefore rewrite the Cauchy problem \eqref{ECP} as
\begin{subequations}\label{SYS4}
\begin{empheq}[left =\empheqlbrace]{align}
&\frac{\partial\mathcal{B}}{\partial y} = i \frac{\partial\mathcal{B}}{\partial x} \label{CR}\\
&\mathcal{B}(\bm{\gamma}) = \Theta,
\end{empheq}
\end{subequations}
where it is understood that equation \eqref{CR} is being solved in a neighbourhood of $\partial\Omega\subset\mathbb{R}^2$. Given $f,h\in C^\omega(\partial\Omega)$, we can solve this system using the Cauchy--Kovalevskaya Theorem. Note that the Cauchy--Kovalevskaya Theorem solves the system on both sides of $\partial\Omega$ simultaneously. Unfortunately, this means any result provided by Cauchy--Kovalevskaya Theorem on the external distance at which we can solve, may be affected by singularities that arise on the inside of $\Omega$. 

We would like to gain quantitative information on the size of the region on which we can solve the first-order system \eqref{SYS4}. To do this, we will follow the proof of the Cauchy--Kovalevskaya Theorem given in \cite[\S2.2]{ReRo}, and then find the domains where the Taylor series converges. We will focus our attention on finding how far we can solve system \eqref{SYS4} from the boundary point $\bm{\gamma}(t_0)$ for some arbitrary $t_0\in\mathbb{T}$. Then, we use the same procedure in patching together solutions as in the end of the proof of Theorem \ref{CPHV}.

The first step is to transform our system so that the boundary is flat. We start by considering the variables $(\widetilde{x},\widetilde{y})$ defined according to 
\[ (x,y) = \bm{\gamma}(\widetilde{x}) + \widetilde{y}\bm{\gamma}'^\perp(t_0). \]
This change of variables from $(x,y)$ to $(\widetilde{x},\widetilde{y})$ has the effect of flattening the boundary since the curve $\bm{\gamma}$ is mapped to the line $\widetilde{y}=0$. Furthermore, the variables can be described as follows: As $\widetilde{x}$ varies we travel along the curve $\bm{\gamma}$ whereas as $\widetilde{y}$ varies we travel in the fixed direction $\bm{\gamma}'^\perp(t_0)$, not in the normal direction to the curve. By changing our variables to $(\widetilde{x},\widetilde{y})$, system \eqref{SYS4} becomes
\begin{subequations}
\begin{empheq}[left =\empheqlbrace]{align}
&\frac{\partial\mathcal{B}}{\partial\widetilde{y}} =  i\left(\frac{\gamma_1'(t_0)+i\gamma_2'(t_0)}{\gamma_1'+i\gamma_2'}\right) \frac{\partial\mathcal{B}}{\partial\widetilde{x}} \label{PDE}\\ 
&\mathcal{B}(\widetilde{x},0) = \Theta(\widetilde{x}).
\end{empheq}
\end{subequations}
This change of variables is well defined since there exists $\delta>0$ such that it is a diffeomorphism on $(\widetilde{x},\widetilde{y})\in (t_0-\delta,t_0+\delta)\times\mathbb{R}$. It is important to note that our change of variables has been chosen such that the coefficient in the partial differential equation \eqref{PDE} does not depend on $\widetilde{y}$. 

Now by setting 
\[ \phi(\widetilde{x},\widetilde{y}) = \mathcal{B}(\widetilde{x},\widetilde{y}) - \Theta(\widetilde{x}), \] 
we obtain
\begin{subequations}\label{CBS2}
\begin{empheq}[left =\empheqlbrace]{align}
&\frac{\partial\phi}{\partial\widetilde{y}} =  \Lambda\frac{\partial\phi}{\partial\widetilde{x}} + \Lambda\Theta' \label{FOS} \\ 
&\phi(\widetilde{x},0) = 0, \label{IC}
\end{empheq}
\end{subequations}
where
\begin{equation}\label{LAMBDA}
\Lambda(\widetilde{x}) := i\left(\frac{\gamma_1'(t_0)+i\gamma_2'(t_0)}{\gamma_1'(\widetilde{x})+i\gamma_2'(\widetilde{x})}\right). 
\end{equation} 
By rewriting the system in this form, we can determine all the partial derivatives of $\phi$ at $(t_0,0)$ in terms of the derivatives of $\Lambda$ and $\Lambda\Theta'$ at $t_0$. The derivatives with respect to $\widetilde{x}$ are zero due to the boundary condition \eqref{IC}, and the mixed derivatives can be deduced from equation \eqref{FOS} to be polynomials with positive coefficients and whose variables are the derivatives of $\Lambda$ and $\Lambda\Theta'$ at $t_0$. Therefore, a Taylor series for $\phi$ in terms of $\widetilde{x}$ and $\widetilde{y}$ which is based at $(t_0,0)$ can be constructed. If it can be shown that this Taylor series converges, then it solves system \eqref{CBS2} within its domain of convergence. This is achieved by replacing $\Lambda$ and $\Lambda\Theta'$ in equation \eqref{FOS} with functions whose derivatives at $t_0$ have a larger magnitude, and then showing that this new system has an explicit solution with convergent Taylor series.  

Let $R_1(t_0), R_2(t_0)>0$ be the radii of convergence for the Taylor series 
\[ \Lambda(\widetilde{x}) =  \sum_{n=0}^\infty b_n(t_0) (\widetilde{x}-t_0)^n \qquad \text{and} \qquad \Lambda(\widetilde{x})\Theta'(\widetilde{x}) =  \sum_{n=0}^\infty c_n(t_0) (\widetilde{x}-t_0)^n \]
respectively. Note that $b_0 =\Lambda(t_0)=i$. For $r\in (0,\min\{R_1,R_2\})$ let
\[ M_1(r) := \sup\{1, |b_1|r, |b_2|r^2, \ldots \} \]
and
\[ M_2(r) := \sup\{|c_0|, |c_1|r, |c_2|r^2, \ldots \}. \]
We have defined the $M_i$ in such a way that the absolute value of the $k$th derivatives of $\Lambda$ and $\Lambda\Theta'$ at $t_0$ are bounded above by $M_1 k! r^{-k}$ and $M_2 k! r^{-k}$ respectively. 

A key step within the proof of the Cauchy--Kovalevskaya Theorem is to observe that the function 
\[ m_i(\widetilde{x}) = \frac{M_ir}{r-(\widetilde{x}-t_0)} \] 
has derivatives
\[ \frac{\mathrm{d}^k m_i}{\mathrm{d}\widetilde{x}^k} (t_0) = M_i k! r^{-k} \] 
for $i=1,2$. By replacing $\Lambda$ and $\Lambda\Theta'$ with $m_1$ and $m_2$ in equation \eqref{FOS} we obtain the new system
\begin{subequations}\label{ME}
\begin{empheq}[left =\empheqlbrace]{align}
&\frac{\partial\widetilde{\phi}}{\partial\widetilde{y}} = \left(\frac{M_1r}{r - (\widetilde{x}-t_0)}\right)\frac{\partial\widetilde{\phi}}{\partial\widetilde{x}} + \frac{M_2r}{r-(\widetilde{x}-t_0)} \\ 
&\widetilde{\phi}(\widetilde{x},0) = 0.
\end{empheq}
\end{subequations}
Since 
\[ \left| \frac{\mathrm{d}^k\Lambda}{\mathrm{d}\widetilde{x}^k} (t_0)\right| \leq \frac{\mathrm{d}^km_1}{\mathrm{d}\widetilde{x}^k} (t_0) \qquad \text{and} \qquad \left| \frac{\mathrm{d}^k(\Lambda\Theta')}{\mathrm{d}\widetilde{x}^k} (t_0)\right| \leq \frac{\mathrm{d}^k m_2}{\mathrm{d}\widetilde{x}^k} (t_0), \]
it follows that if $\phi$ and $\widetilde{\phi}$ solve systems \eqref{CBS2} and \eqref{ME} respectively, then
\begin{equation}\label{BOUN}
\left| \frac{\partial^{k+l}\phi}{\partial \widetilde{x}^k \partial \widetilde{y}^l} (t_0,0) \right| \leq \frac{\partial^{k+l}\widetilde{\phi}}{\partial \widetilde{x}^k \partial \widetilde{y}^l} (t_0,0), 
\end{equation} 
for $k,l\geq0$. 
Using the method of characteristics, the system \eqref{ME} has an explicit solution of the form $\widetilde{\phi}(\widetilde{x},\widetilde{y}) = \frac{M_2}{M_1} V(\widetilde{x}-t_0,\widetilde{y})$ in a neighbourhood of $(t_0,0)$ where
\begin{align*}
V(\widetilde{x},\widetilde{y}) := r - \widetilde{x} - \sqrt{(r-\widetilde{x})^2 - 2M_1 r \widetilde{y}}.
\end{align*}
This solution is analytic at $(t_0,0)$ and so by inequality \eqref{BOUN} the function $\phi$ has a convergent Taylor series at $(t_0,0)$ that solves system \eqref{CBS2}. This is usually where the proof of the Cauchy--Kovalevskaya Theorem ends, but we continue as we wish to find where the Taylor series of $\phi$ converges. 

{We will now attempt to find where the Taylor series for $V$ based at $(0,0)$ converges absolutely. Let $r_1,r_2>0$ and use $D(0,r_1)$ to denote the disc in the complex plane $\mathbb{C}$ centred at the origin with radius $r_1$. From the theory of complex analysis on several variables \cite[\S2]{Horm}, if we can show that $V$ is complex analytic in each variable separately on $D(0,r_1)\times D(0,r_2)$, then the Taylor series of V based at $(0,0)$ converges absolutely on $D(0,r_1)\times D(0,r_2)$. We can use this result to find out where in $\mathbb{R}^2$ the Taylor series of $V$ at $(0,0)$ converges absolutely. 

It is enough to find where the Taylor series of $\sqrt{(r-\widetilde{x})^2 - 2M_1 r \widetilde{y}}$ converges absolutely since it differs from $V$ by a linear term. Note that the square root function can be extended to $\mathbb{C}$ whilst being complex analytic away from the negative real axis. Take $b\in \mathbb{R}$ with $b\leq 0$ to be a point on the negative real axis. Let $a \in (0,r)$ and $(z_1,z_2)\in D(0,a) \times D(0,(r-a)^2/2M_1r)$. We plan to show $(r-z_1)^2-2M_1 rz_2$ remains away from the negative real axis so that $\sqrt{(r-z_1)^2-2M_1 rz_2}$ is complex analytic in each variable separately on $D(0,a) \times D(0,(r-a)^2/2M_1r)$. We do this by considering two cases. Firstly, if $(r-\RE(z_1))^2 \geq \IM(z_1)^2$, then $\RE( (r-z_1)^2)\geq 0$ and so we have
\begin{align*}
|(r-z_1)^2-2M_1 rz_2 -b| & \geq |(r-z_1)^2-b| - 2M_1 r|z_2| \\
& \geq |r-z_1|^2 - 2M_1 r|z_2| \\
& > (r-a)^2 -(r-a)^2 \\
&=0.
\end{align*}
Secondly, if instead $(r-\RE(z_1))^2 < \IM(z_1)^2$, then  
\begin{align*}
|(r-z_1)^2-2M_1 rz_2 -b| &\geq |\IM((r-z_1)^2-2M_1 rz_2)| \\
&= |2(r-\RE(z_1))\IM(z_1) -2M_1 r\IM(z_2)| \\
&\geq 2(r-\RE(z_1))|\IM(z_1)| -2M_1 r|\IM(z_2)| \\
& > 2(r-\RE(z_1))^2 - (r-a)^2 \\
& > (r-a)^2 \\
& > 0.
\end{align*}
Altogether this implies the Taylor series of $\sqrt{(r-z_1)^2-2M_1 rz_2}$ at $(0,0)$ converges absolutely on $D(0,a) \times D(0,(r-a)^2/2M_1r)$.

Thus the Taylor series for $V$ at $(0,0)$ converges absolutely on $D(0,a) \times D(0,(r-a)^2/2M_1 r)$. Therefore, the Taylor series of $\widetilde{\phi}$ at $(t_0,0)$ converges absolutely in the rectangle
\[ \left\{ (\widetilde{x},\widetilde{y}) \in\mathbb{R}^2: |\widetilde{x}-t_0|<a , \ |\widetilde{y}|<\frac{(r-a)^2}{2M_1r}\right\}. \]
We can take the union of these rectangles over $a\in(0,r)$ to obtain convergence within
\[ \widetilde{P_r}:= \left\{(\widetilde{x},\widetilde{y})\in \mathbb{R}^2 : |\widetilde{x}-t_0|<r,\  |\widetilde{y}| < \frac{(r-|\widetilde{x}-t_0|)^2}{2M_1r} \right\}. \]
Since this holds for all $r\in (0,\min\{R_1,R_2\})$ we can also take the union over $r$ to obtain convergence within 
\[ \widetilde{P} := \bigcup_{r\in (0,\min\{R_1,R_2\})} \widetilde{P_r}, \] 
which by inequalities \eqref{BOUN} implies system \eqref{CBS2} has a solution $\phi$ on $\widetilde{P}$. 

Before we change our variables back to $(x,y)$, we are interested in finding how far $\widetilde{P}$ extends in the $\widetilde{y}$ direction from $(t_0,0)$. Note that $(t_0,\widetilde{y})\in \widetilde{P_r}$ for $|\widetilde{y}|<\frac{r}{2M_1}$ and so since $M_1$ is dependent on $r$, the quantity of interest is $\sup_{r\in (0,\min\{R_1,R_2\})}\frac{r}{2M_1(r)}$. This quantity can be expressed as follows.
\begin{lem}\label{R0LEM}
By defining
\[ r_0:= \min\left\{ \frac{1}{\sup_{n\geq1}|b_n|^{\frac{1}{n}}}, R_2 \right\}, \]
it holds that
\begin{equation}\label{DIST}
\sup_{r\in(0,\min\{R_1,R_2\})}\frac{r}{2M_1(r)} = \frac{r_0}{2}. 
\end{equation} 
\end{lem}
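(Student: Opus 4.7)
The plan is to analyze $M_1(r)$ according to which of the two regimes $\beta r \le 1$ and $\beta r > 1$ holds, where $\beta := \sup_{n\ge 1} |b_n|^{1/n}$. The Cauchy--Hadamard formula gives $\limsup_{n\to\infty} |b_n|^{1/n} = 1/R_1$, so $\beta \ge 1/R_1$ and hence $1/\beta \le R_1$; this confirms that $r_0 = \min\{1/\beta,\,R_2\} \le \min\{R_1,R_2\}$ so that $r_0$ lies in the closure of the supremum's index set. Since $b_0 = \Lambda(t_0) = i$, the definition of $M_1(r)$ unpacks to
\[ M_1(r) = \max\!\left\{1,\ \sup_{n\ge 1} |b_n|\,r^n\right\}. \]

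For the upper bound, consider first the regime $\beta r \le 1$: the inequality $|b_n|^{1/n} \le \beta$ gives $|b_n|r^n \le (\beta r)^n \le 1$ for every $n \ge 1$, so $M_1(r) = 1$ and $r/M_1(r) = r \le 1/\beta$. In the regime $\beta r > 1$, fix $\varepsilon \in (0,\beta - 1/r)$; the definition of $\beta$ as a supremum produces some $n \ge 1$ with $|b_n|^{1/n} > \beta - \varepsilon$, whence
\[ M_1(r) \ge |b_n|\,r^n > ((\beta-\varepsilon)r)^{n} \ge (\beta - \varepsilon)r, \]
the last inequality using $(\beta-\varepsilon)r \ge 1$ together with $n \ge 1$. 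Sending $\varepsilon \to 0$ yields $M_1(r) \ge \beta r$, so $r/M_1(r) \le 1/\beta$ here too. Combining this with the trivial bound $r/M_1(r) \le r < R_2$ shows
\[ \sup_{r\in(0,\min\{R_1,R_2\})} \frac{r}{M_1(r)} \le \min\{1/\beta,\,R_2\} = r_0. \]

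For the matching lower bound, I would take a sequence $r \nearrow r_0$ from below inside $(0,\min\{R_1,R_2\})$; in both cases $r_0 = 1/\beta$ and $r_0 = R_2 \le 1/\beta$, such $r$ satisfies $\beta r < 1$, so the first regime applies, $M_1(r) = 1$, and $r/M_1(r) = r \to r_0$. Together with the upper bound this forces the supremum to equal $r_0$, and division by $2$ yields \eqref{DIST}. The one nontrivial step is the $\varepsilon$-argument in the regime $\beta r > 1$: because $\beta$ is a genuine supremum rather than a limit superior, the indices $n$ achieving $|b_n|^{1/n}$ near $\beta$ need not tend to infinity, but this is harmless since the elementary inequality $((\beta-\varepsilon)r)^n \ge (\beta-\varepsilon)r$ holds for every $n \ge 1$ as soon as $(\beta-\varepsilon)r \ge 1$, making the bound $M_1(r) \ge \beta r$ independent of which index is used.
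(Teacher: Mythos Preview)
Your proof is correct and follows essentially the same strategy as the paper's: both arguments identify the threshold $r=1/\beta$ (with $\beta=\sup_{n\ge1}|b_n|^{1/n}$), show that $M_1(r)=1$ below it and $M_1(r)\ge \beta r$ above it, and then read off the supremum. The only cosmetic differences are that the paper works with the reciprocal $M_1(r)/r$ and phrases the bound in the regime $r>1/\beta$ as a proof by contradiction, whereas you argue directly via an $\varepsilon$-approximation; neither choice changes the substance.
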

\begin{proof}
We begin by showing $0<r_0 \leq \min\{R_1,R_2\}$. The inequality $r_0 \leq \min\{R_1,R_2\}$ is a consequence of
\[  \frac{1}{\sup_{n\geq 1}|b_n|^{\frac{1}{n}}} \leq  \frac{1}{\limsup_{n\to\infty}|b_n|^{\frac{1}{n}}} = R_1. \]
For $r\in(0,R_1)$ there exists some constant $C>1$ such that
\[ |b_n|r^n \leq \sum_{n=0}^\infty |b_n|r^n \leq C, \]
which implies $\sup_{n\geq1} |b_n|^{\frac{1}{n}} \leq \sup_{n\geq1}C^{\frac{1}{n}}/r \leq C/r<\infty$. It follows that $r_0>0$. 

To prove the equality \eqref{DIST} it is enough to show that 
\[ \inf_{r\in(0,\min\{R_1,R_2\})} \frac{M_1(r)}{r} = \frac{1}{r_0}. \]
By recalling the definition of $M_1(r)$, we have
\[ \frac{M_1(r)}{r} = \sup \left\{ \frac{1}{r}, |b_1|, |b_2|r ,|b_3|r^2,\ldots \right\}. \]
If $r\in (0,r_0]$, then $r \leq 1/\sup_{n\geq 1}|b_n|^{\frac{1}{n}}$ which implies $|b_n|r^{n-1} \leq 1/r$ for all $n\geq 1$. Therefore $M_1(r)/r = 1/r$ and so
\[ \frac{1}{r_0} = \inf_{r\in(0,r_0)}\frac{1}{r} = \inf_{r\in(0,r_0)}\frac{M_1(r)}{r} \geq \inf_{r\in(0,\min\{R_1,R_2\})} \frac{M_1(r)}{r}. \]

It remains to prove
\[ \frac{1}{r_0} \leq \inf_{r\in(0,\min\{R_1,R_2\})} \frac{M_1(r)}{r}, \]
which is equivalent to showing $1/r_0 \leq M_1(r)/r$ for all $r\in(0,\min\{R_1,R_2\})$. We have already shown $1/r_0\leq 1/r = M_1(r)/r$ for $r\in (0,r_0]$, so we only need to consider $r\in (r_0,\min\{R_1,R_2\})$. Of course if $r_0 =\min\{R_1,R_2\}$, then we are done as no such $r$ exist. Therefore, we assume $r_0 <\min\{R_1,R_2\}$, which must mean that $r_0 = 1/\sup_{n>0}|b_n|^{\frac{1}{n}}$

Suppose for a contradiction that there exists some $r\in(r_0,\min\{R_1,R_2\})$ such that $1/r_0>M_1(r)/r$. As a result, there exists an $\varepsilon\in(0,r-r_0)$ such that $ 1/(r_0 + \varepsilon)>|b_n|r^{n-1}$ for all $n\geq 1$. However, $r_0+\varepsilon >r_0 = 1/\sup_{n>0}|b_n|^{\frac{1}{n}}$ which implies there exists $n\geq 1$ such that 
\[ 1/(r_0 + \varepsilon)<|b_n|(r_0+\varepsilon)^{n-1}<|b_n|r^{n-1}.\] 
This provides us with a contradiction and proves that $1/r_0 \leq M_1(r)/r$ for all $r\in(r_0,\min\{R_1,R_2\})$ which concludes the proof.
\end{proof}

We are now ready to change back to our original variables $(x,y)$. We are only interested in the points that lie above the $\widetilde{x}$-axis and that are within the region where our change of variables is a diffeomorphism, so we define $\widetilde{Q} := \widetilde{P} \cap( (t_0-\delta,t_0+\delta)\times[0,\infty))$. When changing back to the $(x,y)$ variables, the region $\widetilde{Q}$ is mapped to a region $Q$ as depicted in the following figure. 
\begin{figure}[H]
\begin{tikzpicture}[scale=2]

\draw [fill=gray!20, thick, dashed] plot [smooth, samples=100, domain= -1:1] (\x+1.2, {(1-abs(\x))^2}) -- cycle;
\draw [thick] (0,0) -- (2.4,0);
\draw [fill] (1.2,0) circle [radius=1pt] node [below] {$(t_0,0)$};
\node at (1.2,0.4) {$\widetilde{Q}$};
\draw [fill] (1.2,1) circle [radius=1pt] node [right] {$(t_0,\frac{r_0}{2})$};

\node [above] at (3,0.5) {$(x,y) = \bm{\gamma}(\widetilde{x}) + \widetilde{y}\bm{\gamma}'^\perp(t_0)$};
\draw [thick, ->] (2.5,0.5) -- (3.5,0.5);

\begin{scope}[rotate around={-40:(5,0)}]
\draw [fill=gray!20, thick, dashed] plot [smooth, samples=100, domain=-1:1] (\x+5, {(1-abs(\x))^2+(\x)^3}) -- plot [smooth, samples=100, domain=1.1:-1.1] (\x+5, {(\x)^3});
\draw [thick] plot [smooth, samples=100, domain=-1.1:1.1] (\x+5, {(\x)^3}) node [right] {$\bm{\gamma}$};
\draw [fill] (5,0) circle [radius=1pt] node [below left] {$\bm{\gamma}(t_0)$};
\node at (5,0.4) {$Q$};
\draw [fill] (5,1) circle [radius=1pt] node [right] {$\bm{\gamma}(t_0) + \frac{r_0}{2}\bm{\gamma}'^\perp(t_0)$};
\end{scope}

\end{tikzpicture}
\centering
\caption{Mapping $\widetilde{Q}$ to $Q$ via the change of variables from $(\widetilde{x},\widetilde{y})$ to $(x,y)$.}
\label{COV}
\end{figure}
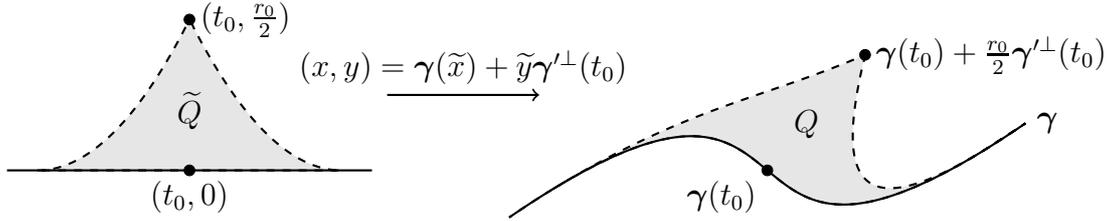
Altogether, we have shown system \eqref{SYS4} has a solution $\mathcal{B}$ on $Q$. It is clear from Figure \ref{COV} and Lemma \ref{R0LEM} that we can find an exterior collar neighbourhood within $Q$ that has the form
\[ N_{t_0} :=  \bigcup_{t\in(t_0-\delta,t_0+\delta)} \{\bm{\gamma}(t) + \varepsilon\bm{\gamma}^\perp (t) : \varepsilon\in[0,l(t)) \}, \]
with $l\colon (t_0-\delta,t_0+\delta)\to (0,\infty]$ satisfying $l(t_0) =\frac{r_0}{2}$. By recalling the definition of $r_0$ and that the $b_n$ and $R_2$ are dependent on $t_0$, we can express the width of $N_{t_0}$ at $\bm{\gamma}(t_0)$ as
\[ d(t_0) := \frac{r_0}{2} |\bm{\gamma}(t_0)^\perp| =  \frac{|\bm{\gamma}'(t_0)|}{2} \min\left\{ \frac{1}{\sup_{n\geq1}|b_n(t_0)|^{\frac{1}{n}}}, R_2(t_0)\right\}. \]

Now by letting $N^*$ be an exterior collar neighbourhood of $\partial\Omega$ with constant width $l^*$, we can patch together the exterior collar neighbourhoods $N_{t}\cap N^*$ over $t\in\mathbb{T}$ in the same way as in the end of the proof of Theorem \ref{CPHV}. This generates a solution to the Cauchy problem \eqref{ECP} on the exterior collar neighbourhood of constant width
\[ d^* := \min \left\{ \inf_{t\in\mathbb{T}} d(t), l^* \right\},  \]
where we take $l^*$ to be the maximum width of all possible constant width exterior collar neighbourhoods of $\partial\Omega$. Altogether, we have proven the following theorem. 
\begin{thm}
For $f,h\in C^\omega(\partial\Omega)$ there exists $\B \in C^1(U\setminus \overline{\Omega};\mathbb{R}^2) \cap C(U\setminus \Omega;\mathbb{R}^2)$ that solves the Cauchy problem \eqref{ECP} on the exterior collar neighbourhood of constant width $d^*$ which has the form
\[ U\setminus \Omega = \bigcup_{\bm{w}\in\partial\Omega} \{\bm{w}+ \varepsilon\N(\bm{w}) : \varepsilon\in[0,d^*)\}. \]
\end{thm}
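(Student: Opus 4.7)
The plan is to combine the complex reformulation developed earlier in the section with the Cauchy--Kovalevskaya majorant method, and then to patch the resulting local solutions using Holmgren's uniqueness exactly as at the end of the proof of Theorem \ref{CPHV}. Fix $t_0\in\mathbb{T}$. Using $\mathcal{B}=B_1-iB_2$ and the change of variables $(x,y)=\bm{\gamma}(\widetilde{x})+\widetilde{y}\bm{\gamma}'^\perp(t_0)$, which straightens the boundary near $\bm{\gamma}(t_0)$ and makes the coefficient $\Lambda$ independent of $\widetilde{y}$, the Cauchy problem \eqref{ECP} reduces to the first-order system \eqref{CBS2} for $\phi=\mathcal{B}-\Theta$ with zero Cauchy data on $\widetilde{y}=0$.

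Next I would set up the majorant system \eqref{ME}, substituting for $\Lambda$ and $\Lambda\Theta'$ the geometric-series dominants $m_1,m_2$. An induction on $l$ using \eqref{FOS} shows that every mixed derivative of $\phi$ at $(t_0,0)$ is a polynomial with non-negative coefficients in the derivatives of $\Lambda$ and $\Lambda\Theta'$, which gives the coefficient-wise bound \eqref{BOUN}. The method of characteristics provides the explicit solution $\widetilde{\phi}=(M_2/M_1)V(\widetilde{x}-t_0,\widetilde{y})$ of \eqref{ME}, so the task reduces to locating the domain of absolute convergence of the Taylor series of $V$ based at the origin.

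For that step I would appeal to Hartogs-style separate analyticity: since $V$ differs from the square-root term by a linear polynomial, it suffices to show that $\sqrt{(r-z_1)^2-2M_1 rz_2}$, taken with the principal branch, is complex analytic in each variable separately on $D(0,a)\times D(0,(r-a)^2/2M_1r)$. This follows by verifying that the argument of the square root remains off the negative real axis, with a case split according to whether $(r-\RE z_1)^2\geq\IM(z_1)^2$ or not. Taking unions over $a\in(0,r)$ and then over $r\in(0,\min\{R_1,R_2\})$ yields the convergence set $\widetilde{P}$ for $\phi$, and Lemma \ref{R0LEM} identifies its maximum vertical extent at $\widetilde{x}=t_0$ as $r_0/2$. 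Undoing the change of variables converts this into an exterior collar neighbourhood $N_{t_0}$ having width $d(t_0)=(|\bm{\gamma}'(t_0)|/2)\min\{1/\sup_{n\geq1}|b_n(t_0)|^{1/n},R_2(t_0)\}$ at $\bm{\gamma}(t_0)$.

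Finally, to globalise, I would repeat the construction at every $t\in\mathbb{T}$ and intersect each $N_t$ with a constant-width exterior collar $N^*$ of $\partial\Omega$. The intersections $N_t\cap N_s\cap N^*$ are themselves exterior collar neighbourhoods of arcs of $\partial\Omega$, so Holmgren's Uniqueness Theorem forces the local solutions to agree on overlaps and glue into a single $\B\in C^1(U\setminus\overline{\Omega};\mathbb{R}^2)\cap C(U\setminus\Omega;\mathbb{R}^2)$ defined on the constant-width collar of radius $d^*=\min\{\inf_t d(t),l^*\}$. The main obstacle is the separate-analyticity estimate of step three, since that is where the geometric quantity $d(t_0)$ is actually extracted; the patching step is essentially a verbatim re-use of the argument closing the proof of Theorem \ref{CPHV}.
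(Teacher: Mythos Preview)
Your proposal is correct and follows essentially the same approach as the paper: the complex reformulation, the straightening change of variables, the majorant system \eqref{ME} solved by characteristics, the separate-analyticity argument for $V$ with exactly the same case split, the use of Lemma \ref{R0LEM} to read off the vertical extent $r_0/2$, and the Holmgren-based patching borrowed from the end of Theorem \ref{CPHV} all match the paper's development in Section \ref{SEC3}. There is no meaningful difference in strategy or in the technical details.
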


We have shown that there exists an external harmonic extension to at least a distance $d^*$ away from $\partial\Omega$. We now remark on how $d^*$ relates to the curvature of $\partial\Omega$.
\begin{rmk}
Observe that 
\[ \RE \Lambda =  \frac{\gamma_1'(t_0)\gamma_2'-\gamma_2'(t_0)\gamma_1'}{|\bm{\gamma}'|^2}, \]
and hence
\[ (\RE \Lambda)'(t_0) =  \frac{\gamma_1'(t_0)\gamma_2''(t_0)-\gamma_2'(t_0)\gamma_1''(t_0)}{|\bm{\gamma}'(t_0)|^2}. \]
The curvature of $\bm{\gamma}$ is 
\[ \kappa = \frac{|\gamma_1'\gamma_2''-\gamma_2'\gamma_1''|}{|\bm{\gamma}'|^3} \]
which implies 
\[ \kappa(t_0) = \frac{|(\RE \Lambda)'(t_0)|}{|\bm{\gamma}'(t_0)|} = \frac{|\RE b_1(t_0)|}{|\bm{\gamma}'(t_0)|} \] 
Therefore, 
\[ d^*\leq d(t_0) \leq \frac{|\bm{\gamma}'(t_0)|}{2|b_1(t_0)|} \leq \frac{|\bm{\gamma}'(t_0)|}{2|\RE b_1(t_0)|} = \frac{1}{2\kappa(t_0)} \] 
and so
\[ d^* \leq \frac{1}{2}\inf_{\mathbb{T}}\left(\frac{1}{\kappa}\right). \]
This shows that our lower bound on how far we can harmonically extend is no more than half the minimum radius of curvature.
\end{rmk}
We now go about finding $d^*$ for some simple examples where we can compute the quantity $\sup_{n\geq 1}|b_n(t_0)|^{\frac{1}{n}}$ explicitly. Note that two of our examples are for boundaries $\partial\Omega$ that are not closed curves, however, our workings can easily be adapted to these settings.
\begin{eg}
Let $\partial\Omega$ be the circle of radius $R>0$ parameterised clockwise by $\bm{\gamma}(t) = R(\cos t,-\sin t)$, and suppose the boundary data $f(\bm{\gamma}(t)),h(\bm{\gamma}(t))$ has an analytic continuation to $\mathbb{C}$. We have
\[ \Lambda(\widetilde{x}) = ie^{i(\widetilde{x}-t_0)} = \sum_{n=0}^\infty \frac{i^{(n+1)}}{n!}(\widetilde{x}-t_0)^n. \]
It follows that $|b_n(t_0)| = 1/n!$, which implies $\sup_{n\geq 1}|b_n(t_0)|^{\frac{1}{n}}=1$. Both $\Lambda$ and $\Theta$ have an analytic continuation to $\mathbb{C}$ and so $R_2(t)=\infty$ for all $t\in\mathbb{T}$. Therefore, $d(t) = R/2$. In the case of a circle we have $l^*=\infty$. Overall, $d^* = R/2$, which shows that in this setting $d^*$ is dependent on the curvature of $\partial\Omega$.
\end{eg}

\begin{eg}
Let $\partial\Omega$ be the flat boundary of the form $\bm{\gamma}(t)=(t,0)$, and suppose the boundary data $f(\bm{\gamma}(t)),h(\bm{\gamma}(t))$ has an analytic continuation to the complex strip $\{z\in\mathbb{C} : |\IM(z)|<a\}$ for some $a>0$ and no further. We have $\Lambda(\widetilde{x}) = i$, which implies $|b_n(t_0)|=0$ for $n\geq 1$ and $\sup_{n\geq 1}|b_n(t_0)|^{\frac{1}{n}}=0$. The function $\Lambda\Theta'$ has an analytic continuation to $\{z\in\mathbb{C} : |\IM(z)|<a\}$ and no further, which implies $\inf_{t\in\mathbb{R}} R_2(t)= a$. Hence $\inf_{t\in\mathbb{R}} d(t)= a/2$. In the case of a flat boundary $l^*=\infty$ and thus $d^*= a/2$, which shows that in this setting $d^*$ is dependent on the extent to which the boundary data can be analytically continued. 
\end{eg}

\begin{eg}
Let $\partial\Omega$ be the parabola of the form $\bm{\gamma}(t)=(t,t^2)$, and suppose the boundary data is such that $f(\bm{\gamma}(t)),h(\bm{\gamma}(t))$ has an analytic continuation to $\mathbb{C}$. We have
\[ \Lambda(\widetilde{x}) = i\left( \frac{1+2t_0i}{1+2\widetilde{x}i}\right) = \sum_{n=0}^\infty i\left(\frac{-2i}{1+2t_0i}\right)^n (\widetilde{x}-t_0)^n, \]
which implies $|b_n(t_0)|^{\frac{1}{n}}= 2/\sqrt{1+4t_0^2}$ and $\sup_{n\geq1}|b_n(t_0)|^{\frac{1}{n}}= 2/\sqrt{1+4t_0^2}$. The functions $\Lambda(t)$ and $|\bm{\gamma}'(t)| = \sqrt{1+4t^2}$ both have an analytic continuation to the complex strip $\{z\in\mathbb{C} : |\IM(z)|<1/2 \}$, which implies $\Lambda\Theta'$ also has an analytic continuation there. Consequently, $R_2(t)\geq 1/2$ for all $t\in\mathbb{R}$. It follows that $\inf_{t\in\mathbb{R}} d(t) = d(0) = 1/4$ since $1/\sup_{n\geq1}|b_n(t)|^{\frac{1}{n}}\geq 1/2$ and $1/\sup_{n\geq1}|b_n(0)|^{\frac{1}{n}}=1/2$. The quantity $l^*$ for this parabola is the smallest radius of curvature of $\bm{\gamma}$, which turns out to be $1/2$. We therefore conclude $d^*=1/4$. 
\end{eg}

In the case where $\sup_{n\geq 1}|b_n(t_0)|^{\frac{1}{n}}$ can not computed explicitly, we can approximate it using the following method. For $t_0\in\mathbb{T}$, let $a_n(t_0)\in\mathbb{C}$ be the Taylor coefficients satisfying
\[ \gamma_1'(\widetilde{x})+i\gamma_2'(\widetilde{x}) = \sum_{n=0}^\infty a_n(t_0) (\widetilde{x}-t_0)^n. \]
Then the Taylor coefficients of $\Lambda$ at $t_0$ can be expressed in terms of the $a_n$ as
\[ b_n = \frac{i}{(\gamma_1'(t_0)+i\gamma_2'(t_0))^n} \det A_n \]
for $n\geq 1$ where 
\[ A_n = \begin{pmatrix}
0 			& a_1 		& a_2 		& \cdots & a_n 		\\
0 			& a_0 		& a_1 		& \cdots & a_{n-1} 	\\
0 			& 0    		& a_0 		& \cdots & a_{n-2} 	\\
\vdots 	& \vdots	& \vdots	& \ddots & \vdots	\\
1			& 0			& 0			& \cdots & a_0										
\end{pmatrix}. \]
To prove this it is enough to show that
\begin{equation}\label{TSM}
 b_n = -\frac{1}{a_0} \sum_{k=1}^n a_k b_{n-k}, 
\end{equation}
which comes from multiplying the Taylor series of $\gamma_1'+i\gamma_2'$ and $\Lambda$. To show that the expression for $b_n$ satisfies \eqref{TSM}, expand $\det A_n$ by the first row and then keep expanding the determinants of the minors by the columns consisting only of $a_0$ until the result is obtained. 

We bound the expression for the Taylor coefficients $b_n$ using Hadamard's Inequality \cite[\S14.1]{Gar}.
\begin{thm}[Hadamard's Inequality]
Let $M=(m_{j,k})$ be a real or complex $n\times n$ matrix. Then
\[ |\det M| \leq \prod_{k=1}^n \left( \sum_{j=1}^n |m_{j,k}|^2 \right)^{\frac{1}{2}}. \]
\end{thm}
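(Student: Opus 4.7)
The plan is to exploit the classical geometric interpretation: $|\det M|$ is the (Euclidean) volume of the parallelepiped spanned by the columns of $M$, and for prescribed column lengths this volume is maximised precisely when the columns are mutually orthogonal, in which case it equals the product of the lengths. The formal vehicle for turning this intuition into a proof will be the QR factorisation, i.e.\ Gram--Schmidt orthogonalisation applied to the columns of $M$.

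First I would dispose of the degenerate case. Let $\bm{v}_1,\dots,\bm{v}_n\in\mathbb{C}^n$ be the columns of $M$. If they are linearly dependent, then $\det M=0$ and the inequality is automatic, so assume independence.

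Next, I would Gram--Schmidt the $\bm{v}_k$ to produce an orthonormal basis $\bm{e}_1,\dots,\bm{e}_n$ with $\bm{e}_k\in\operatorname{span}(\bm{v}_1,\dots,\bm{v}_k)$. Writing $r_{jk}:=\langle\bm{e}_j,\bm{v}_k\rangle$ for $j\le k$ and $r_{jk}:=0$ for $j>k$, this gives the factorisation $M=QR$ in which $Q=(\bm{e}_1\mid\dots\mid\bm{e}_n)$ is unitary and $R=(r_{jk})$ is upper triangular. Since $|\det Q|=1$,
\[
|\det M|=|\det R|=\prod_{k=1}^n |r_{kk}|.
\]
On the other hand $\bm{v}_k=\sum_{j=1}^k r_{jk}\bm{e}_j$, so Parseval's identity (equivalently, unitary invariance of the Euclidean norm) yields
\[
\sum_{j=1}^n |m_{j,k}|^2=|\bm{v}_k|^2=\sum_{j=1}^k|r_{jk}|^2\ \ge\ |r_{kk}|^2.
\]
Combining these two displays and taking the product over $k$ gives the claimed bound.

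The argument is essentially mechanical once the QR step is available, so there is no genuine obstacle; the only care needed is to carry the Hermitian inner product (rather than the real one) through the Gram--Schmidt step so that the result covers complex matrices, and to remember that $|\det Q|=1$ rather than $\det Q=1$ (since $Q$ is unitary, not special unitary). A quick sanity check confirms equality when the columns are pairwise orthogonal, as expected from the geometric picture.
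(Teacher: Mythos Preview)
Your proof via QR factorisation is correct and is one of the standard arguments for Hadamard's inequality. Note, however, that the paper does not actually prove this theorem: it merely states it, with a citation to \cite{Gar}, and then applies it to bound $|b_n|$. So there is no paper proof to compare against; you have supplied a valid proof where the paper simply quotes the result from the literature.
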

Using this inequality we have
\begin{align*}
|b_n| &= \frac{1}{|\bm{\gamma}'(t_0)|^n} |\det A_n | \\
&\leq \frac{1}{|\bm{\gamma}'(t_0)|^n} \prod_{k=1}^n \left( \sum_{j=0}^{k} |a_j|^2 \right)^{\frac{1}{2}} \\
&\leq \frac{1}{|\bm{\gamma}'(t_0)|^n}\left( \sum_{j=0}^{n} |a_j|^2 \right)^{\frac{n}{2}} \\
&\leq \frac{1}{|\bm{\gamma}'(t_0)|^n}\left( \sum_{j=0}^{n} |a_j|\right)^n,
\end{align*} 
which implies
\begin{equation}\label{APROX}
\sup_{n\geq 1} |b_n(t_0)|^{\frac{1}{n}} \leq \frac{1}{|\bm{\gamma}'(t_0)|} \sum_{n=0}^{\infty} |a_n(t_0)|. 
\end{equation} 
We can use this result to find an approximate of $d^*$ for more complicated boundaries as shown in the following example.
\begin{eg}
Suppose the boundary $\partial\Omega$ can be parameterised by $\bm{\gamma}$ that has the form of the finite Fourier series 
\[ \gamma_1(t)+i\gamma_2(t) = \sum_{k=-N}^N c_k e^{ikt} \]
for some $N\geq 1$ and $c_k\in\mathbb{C}$. Thus
\[ a_n(t_0) = \sum_{k=-N}^N \frac{c_ke^{ikt_0}(ik)^{n+1}}{n!} \]
and so by substituting this into inequality \eqref{APROX}, we obtain 
\[ \sup_{n\geq 1} |b_n(t_0)|^{\frac{1}{n}} \leq \frac{1}{|\bm{\gamma}'(t_0)|} \sum_{n=0}^{\infty} \sum_{k=-N}^N \frac{|c_k| |k|^{n+1}}{n!} = \frac{1}{|\bm{\gamma}'(t_0)|} \sum_{k=-N}^N |c_k||k|e^{|k|}. \]
If we suppose the boundary data is such that $f(\bm{\gamma}(t))/|\bm{\gamma}'(t)|$ and $h(\bm{\gamma}(t))/|\bm{\gamma}'(t)|$ have an analytic continuation to $\mathbb{C}$, then $R_2(t)=\infty$ for all $t\in\mathbb{T}$. Therefore,
\[ d(t) \geq \frac{|\bm{\gamma}'(t)|^2}{2\sum_{k=-N}^N |c_k||k|e^{|k|}} \]
which provides us with
\[ d^* \geq \min \left\{ \frac{\inf_{t\in\mathbb{T}}|\bm{\gamma}'(t)|^2}{2\sum_{k=-N}^N |c_k||k|e^{|k|}} , l^* \right\}. \]
The right hand side is a lower bound for $d^*$ and therefore a lower bound on how far we can harmonically extend from a boundary that is represented by a finite Fourier series.
\end{eg}

\section{Acknowledgements}
We would like to thank Per Helander and Robert MacKay for providing direction. This work was supported by a PhD studentship from the Simons Foundation (601970, RSM).  

\bibliographystyle{unsrt}
\bibliography{2DHarmonicExtensions}

\end{document}